\begin{document}

\author[J.\,\`Alvarez Montaner]{Josep \`Alvarez Montaner}

\address{Departament de Matem\`atiques, Universitat Polit\`ecnica de Catalunya, Avinguda Diagonal 647,
Barcelona 08028, SPAIN} \email{Josep.Alvarez@upc.edu}

\author[A.\,F.\,Boix]{Alberto F.\,Boix}

\address{Department of Mathematics, Ben-Gurion University of the Negev, P.O.B. 653 Beer-Sheva 84105, Israel.}
\email{fernanal@post.bgu.ac.il}

\author[S.\,Zarzuela]{Santiago Zarzuela}

\address{Departament d'\`Algebra i Geometria, Universitat de Barcelona, Gran Via de les Corts Catalanes 585, Barcelona 08007, SPAIN} \email{szarzuela@ub.edu}

\thanks{The first author is partially supported by Generalitat de Catalunya 2014SGR-634 project
and Spanish Ministerio de Econom\'ia y Competitividad
MTM2015-69135-P. The second and third authors are  supported by
Spanish Ministerio de Econom\'ia y Competitividad MTM2016-7881-P.
The second author is partially supported by the Israel Science Foundation (grant No. 844/14).}

\title[Koszul complex over skew polynomial rings]{Koszul complex over skew polynomial rings}

\keywords {Cartier algebras, Frobenius algebras, skew polynomial rings.}

\subjclass[2010]{Primary 13A35; Secondary 13B10, 14B25, 16S36}

\begin{abstract}
We construct a Koszul complex in the category of left skew polynomial rings associated to a flat
endomorphism that provides a finite free resolution of an ideal generated 
by a Koszul regular sequence. 


\end{abstract}

\dedicatory{Dedicated to Professor Gennady Lyubeznik on the occasion
     of his $60$th birthday}

\maketitle

\section{Introduction}
Let  $A$ be a commutative Noetherian ring of characteristic $p>0$ and $F:A \longrightarrow A$ the associated
Frobenius map for which $F(a)=a^p$ for all $a\in A$. The study of $A$-modules with an action of the Frobenius 
map has received a lot of attention over the last decades and is at the core of the celebrated 
theory of tight closure developed by M.\;Hochster and C.\;Huneke in \cite{HoHun1990} and the theory of $F$-modules 
introduced by G.\;Lyubeznik  \cite{Lyu1997}.

\vskip 2mm

To provide an action of the Frobenius on an $A$-module $M$ is equivalent to give a left $A[\Theta;F]$-module structure
on $M$. Here $A[\Theta;F]$ stands for the \emph{left skew polynomial ring} associated to $F$, which is an associative,
$\mathbb{N}$-graded, not necessarily commutative ring extension of $A$. More generally, we may also consider the 
skew polynomial rings $A[\Theta;F^e]$ associated to the $e$-th iterated Frobenius map and
the graded ring  $\mathcal{F}^M = \bigoplus_{e\geq 0} \mathcal{F}^M_e$ introduced by
G.\,Lyubeznik and K.\,E.\,Smith in \cite{LyubeznikSmith2001} that collects all the $A[\Theta;F^e]$-module structures
on $M$ or equivalently, all possible actions of $F^e$ on $M$. In the case that  $\mathcal{F}^M$ is principally generated 
then it is isomorphic to $A[\Theta;F]$. We want also to single out
here that, under the terminology \emph{skew polynomial ring of
Frobenius type}, Y.\,Yoshino \cite{Yoshino1994} studied $A[\Theta;F]$ and
left modules over it, and used this study to obtain some new results about
tight closure theory; following the same spirit, R.\,Y.\,Sharp 
\cite{Sharp2009} also studied the ideal theory of $A[\Theta;F].$ The
case when $A$ is a field was studied in detail by F.\,Enescu
\cite{Enescu2012}.



\vskip 2mm 

One may also develop a dual notion of  \emph{right skew polynomial ring} associated to $F$ that we denote 
$A[\varepsilon;F]$. A left $A[\varepsilon;F]$-module structure on $M$ is then equivalent to 
provide an action of a Cartier operator  as considered by M.\,Blickle and G.\,B\"ockle in \cite{BlickleBockle2011}.
The corresponding ring collecting all the $A[\varepsilon;F^e]$-module structures
on $M$, that we denote $\mathcal{C}^M = \bigoplus_{e\geq 0} \mathcal{C}^M_e$, was developed
by K.\,Schwede \cite{Schwede2011} and  M.\,Blickle \cite{Blickle2013} (see \cite{BlickleSchwede2013} for a nice survey)
and has been a hot topic in recent years because of its role  in the theory of \emph{test ideals}.

\vskip 2mm

Although the theory of skew polynomial rings is classical
(see \cite[Chapter 2]{GoodWar2004} and \cite[Chapter 1, Sections 2 and 6]{McRob2001}), 
the aim of this work is to go back to its basics and develop new tools  that should be potentially useful 
in the study of modules with a Frobenius or Cartier action.
Our approach will be in a slightly more general setting as 
we will consider a Noetherian commutative ring $A$ (not necessarily of positive characteristic) 
and a ring homomorphism $\varphi: A \longrightarrow A$. Although most of the results
are mild generalizations of the case of the 
Frobenius morphism we point out that this general approach has already been fruitful
(see \cite{SinghWalther2007}).

\vskip 2mm 


Now, we overview the contents of this paper for the
convenience of the reader. In Section \ref{preliminaries section} we introduce all the basics on left and right skew polynomial 
rings  associated to a morphism $\varphi$ that will be denoted by $A[\Theta;\varphi]$ and $A[\varepsilon;\varphi]$.  
More generally, given an $A$-module $M$, we will consider the corresponding rings $\mathcal{F}^{M,\varphi}$ and 
$\mathcal{C}^{M,\varphi}$. The main result of this Section is Theorem \ref{connectionCartierFrobenius} 
which states that, whenever $\varphi$ is a finite morphism, we have
an isomorphism between the graded pieces $\mathcal{F}^{E_R,\varphi}_e$ and $\mathcal{C}^{R,\varphi}_e$ 
induced by Matlis duality. Here, $E_R$ is the injective hull of the residue field of $R= A/I$ with 
$I \subseteq A:=\mathbb{K}[\![x_1,\ldots,x_n]\!]$ being an ideal.

\vskip 2mm

In Section \ref{our own Koszul complex} we present the main result of this work. Here we consider 
a flat morphism $\varphi$ satisfying some extra condition (which is naturally satisfied in the case of the Frobenius morphism). 
First we introduce the \emph{$\varphi$-Koszul complex} 
which is a Koszul-type complex associated to  $x_1, \dots , x_n \in A$  in the category of left $A[\Theta; \varphi]$-modules. 
In Theorem \ref{resolution} we prove that, whenever $x_1, \dots , x_n$ is an $A$-Koszul regular sequence (see Definition \ref{Koszul regular sequence}), 
the $\varphi$-Koszul complex  provides a free resolution of $A/I_n$ in the category of left 
$A[\Theta; \varphi]$-modules, where $I_n$ is the ideal generated by $x_1, \dots , x_n$. In the case where $A$ is a regular ring of positive characteristic $p>0$ and $\varphi=F^e$
is the $e$-th iteration of the Frobenius, we obtain a free resolution, in the category of left $A[\Theta; F^e]$-modules,
of the ideal $I_n$ (see Corollary \ref{resolution for the Frobenius}).

\vskip 2mm

To the best of our knowledge this is one of the first explicit examples 
of free resolutions of $A$-modules as modules over a skew polynomial ring. 
We hope that a development of this theory of free resolutions would provide, in the case of the Frobenius morphism, more insight 
in the study of modules with a Frobenius action.



\section{Preliminaries}\label{preliminaries section}

Let $A$ be a commutative Noetherian ring and  $\varphi : A \longrightarrow A$  a ring homomorphism.
Associated to $\varphi$ we may consider non necessarily commutative algebra extensions of $A$ that are
useful in the sense that, some non finitely generated $A$-modules become finitely generated
when viewed over these extensions. In this section we will collect the basic facts on this theory
keeping always an eye on the case of the Frobenius morphism.

\vskip 2mm

First we recall that $\varphi$ allows to describe  a covariant functor, the \emph{pushforward} or \emph{restriction of scalars} functor $\varphi_*$, from the category of left $A$-modules to the category of left $A$-modules. Namely, given  an $A$-module $M$,  $\varphi_* M$ 
is the $(A,A)$-bimodule having the usual $A$-module structure on the right but the left structure is twisted by  $\varphi$; that is, 
if we denote by $\varphi_* m$ ($m\in M$) an arbitrary element of $\varphi_* M$ then, for any $a\in A$
\[
a\cdot\varphi_* m:=\varphi_* (\varphi(a)m).
\]
Moreover, given a map $g: M \longrightarrow N$ of left $A$-modules we can define a map 
$\varphi_*g: \varphi_*M \longrightarrow \varphi_*N$ 
of left $A$-modules by setting, for any $m\in M$,
\[
\varphi_* g (\varphi_* m):=\varphi_* g(m).
\]

\subsection{Left and right skew-polynomial rings}
The most basic ring extensions given by a ring homomorphism are the  so-called \emph{skew-polynomial rings} or 
\emph{Ore extensions}. This is a classical object of study, and we refer either to \cite[Chapter 2]{GoodWar2004}
or \cite[Chapter 1, Sections 2 and 6]{McRob2001} for further details.

\begin{df} \label{df:Skew}
Let $A$ be a commutative Noetherian ring and $\varphi: A \longrightarrow A$  
a ring homomorphism.
\vskip 2mm 

$\bullet$ \emph{Left skew polynomial ring} of $A$ \emph{determined by $\varphi$}: 
\[
A[\Theta;\varphi]:=\frac{A\langle\Theta\rangle}{\langle\Theta a -\varphi (a)\Theta \mid\quad a\in A\rangle},
\]
that is,  $A[\Theta;\varphi]$ is a free left $A$-module with  basis 
$\{\theta^i\}_{i\in{\mathbb N}_0}$ and $\theta\cdot a=\varphi (a)\theta$ for all $a\in A$.
\vskip 2mm 

$\bullet$ \emph{Right skew polynomial ring} of $A$ \emph{determined by $\varphi$}: 
\[
A[\varepsilon;\varphi]:= \frac{A\langle\varepsilon\rangle}{\langle a\varepsilon - \varepsilon\varphi(a) \mid\quad a\in A\rangle},
\]
that is,  $A[\varepsilon;\varphi]$ is a free right $A$-module with  basis  $\{\varepsilon^i\}_{i\in{\mathbb N}_0}$ and $a\cdot\varepsilon=\varepsilon\varphi (a)$ for all $a\in A$.
\end{df}

\begin{rk} \label{opposite}
The reader will easily note that $A[\Theta;\varphi]^{op}\cong A[\varepsilon;\varphi]$ and
$A[\varepsilon;\varphi]^{op}\cong A[\Theta;\varphi],$ where $(-)^{op}$ denotes the opposite ring. 
\end{rk}

Both ring extensions are determined by the following \emph{universal property}.

\begin{prop}
Let $B=A[\Theta;\varphi]$ be the left   (resp. $B=A[\varepsilon;\varphi]$ be the right)  skew polynomial ring of $A$ determined 
by $\varphi$. Suppose that we have a ring $T$, a ring homomorphism $\phi: A \longrightarrow T$
 and an element $y\in T$ such that, for each $a\in A$, $$y\phi (a)=\phi(\varphi (a))y \qquad ({\rm resp.} \hskip 2mm \phi (a)y=y\phi(\varphi (a))).$$Then, there is a unique ring homomorphism $\xymatrix@1{B\ar[r]^-{\psi}& T}$ such that $\psi (\Theta)=y$ (resp. $\psi (\varepsilon)=y$) which makes the triangle
\begin{equation*}
\xymatrix{A\ar[dr]^-{\phi}\ar@{^{(}->}[rr]& \hbox{}& B\ar@{-->}[dl]^-{\psi}\\
\hbox{}& T& \hbox{}} 
\end{equation*}
commutative.
\end{prop}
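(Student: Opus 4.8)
The plan is to read off both the existence and the uniqueness of $\psi$ from the presentation of $B$ as a quotient of the free ring $A\langle\Theta\rangle$ modulo the two-sided ideal generated by the commutation relations $\Theta a - \varphi(a)\Theta$, $a\in A$. I will treat the left skew polynomial ring $B=A[\Theta;\varphi]$ in detail; the right skew polynomial ring $A[\varepsilon;\varphi]$ is handled by the symmetric argument, or deduced from the left case by passing to opposite rings and invoking Remark \ref{opposite}.

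I would begin with uniqueness, since it also dictates the only admissible definition of $\psi$. Any ring homomorphism $\psi\colon B\longrightarrow T$ making the triangle commute must satisfy $\psi(a)=\phi(a)$ for all $a\in A$ (commutativity of the triangle is precisely the statement $\psi|_A=\phi$) together with $\psi(\Theta)=y$. Because $A$ and $\Theta$ generate $B$ as a ring and $\psi$ is additive and multiplicative, these two requirements pin down $\psi$ on every element: writing an arbitrary element in its normal form $\sum_i a_i\Theta^i$, which is legitimate since $\{\Theta^i\}_{i\in{\mathbb N}_0}$ is a left $A$-basis of $B$ by Definition \ref{df:Skew}, one is forced to set $\psi\bigl(\sum_i a_i\Theta^i\bigr)=\sum_i \phi(a_i)\,y^i$. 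Hence at most one such $\psi$ can exist.

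For existence I would construct $\psi$ in two steps. First, the universal property of the free construction $A\langle\Theta\rangle$ produces a unique ring homomorphism $\widetilde{\psi}\colon A\langle\Theta\rangle\longrightarrow T$ with $\widetilde{\psi}|_A=\phi$ and $\widetilde{\psi}(\Theta)=y$, since $\phi$ is a ring homomorphism into $T$ and $y$ is an element of $T$ subject to no constraint at this stage. Second, I would verify that $\widetilde{\psi}$ annihilates each generator of the defining ideal: for every $a\in A$,
\[
\widetilde{\psi}\bigl(\Theta a-\varphi(a)\Theta\bigr)=\widetilde{\psi}(\Theta)\,\widetilde{\psi}(a)-\widetilde{\psi}(\varphi(a))\,\widetilde{\psi}(\Theta)=y\,\phi(a)-\phi(\varphi(a))\,y=0,
\]
where the final equality is exactly the compatibility hypothesis. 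As $\widetilde{\psi}$ respects products, the entire two-sided ideal generated by these elements is contained in $\ker\widetilde{\psi}$, so $\widetilde{\psi}$ factors through the quotient $B=A\langle\Theta\rangle/\langle\Theta a-\varphi(a)\Theta\mid a\in A\rangle$, yielding the desired $\psi\colon B\longrightarrow T$ with $\psi|_A=\phi$ and $\psi(\Theta)=y$.

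The argument is routine, so I do not anticipate a serious obstacle; the one point deserving care is the well-definedness in the second step, namely confirming that the single family of relations $\Theta a-\varphi(a)\Theta$ maps to zero. This is precisely where the hypothesis $y\,\phi(a)=\phi(\varphi(a))\,y$ is consumed, and once it holds on the generators it holds on the whole ideal automatically because $\widetilde{\psi}$ is multiplicative. For the right skew polynomial ring $A[\varepsilon;\varphi]$ the identical reasoning applies with the expression $y\,\phi(a)-\phi(\varphi(a))\,y$ replaced by $\phi(a)\,y-y\,\phi(\varphi(a))$, matching the relation $a\varepsilon-\varepsilon\varphi(a)$ and the corresponding hypothesis.
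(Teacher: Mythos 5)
Your proposal is correct: the existence argument via the universal property of $A\langle\Theta\rangle$ together with the check that $\widetilde{\psi}$ kills the generators $\Theta a-\varphi(a)\Theta$, and the uniqueness argument from the normal form $\sum_i a_i\Theta^i$, are both sound, and the right-hand case does follow symmetrically (or via Remark \ref{opposite}). The paper itself states this proposition without proof, deferring to the classical references \cite[Chapter 2]{GoodWar2004} and \cite[Chapter 1, Sections 2 and 6]{McRob2001}, and your argument is precisely the standard one found there, so there is nothing to reconcile.
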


This universal property allow us to provide more general examples of 
skew polynomial rings.

\begin{ex}
Let $u\in A$. As $u\Theta$ is an element of $A[\Theta ;\varphi]$ such that, for any $a\in A$,
\[
(u\Theta)a=u\Theta a=u\varphi (a)\Theta =\varphi (a) (u\Theta),
\]
it follows from the universal property for left skew polynomial rings that there is a unique $A$--algebra homomorphism $\xymatrix@1{A[\Theta '; \varphi]\ar[r]& A[\Theta ;\varphi]}$ sending $\Theta '$ to $u\Theta;$ we shall denote the image of this map by $A[u\Theta ;\varphi]$. The previous argument shows that $A[u\Theta ;\varphi]$ can be also regarded as a left skew polynomial ring. Analogously, 
$A[\varepsilon u ;\varphi]$ can be also regarded as right skew polynomial ring since we have, for any $a\in A$,
\[
a(\varepsilon u)=\varepsilon\varphi (a)u =(\varepsilon u)\varphi (a).
\]
\end{ex}

\subsubsection{The case of Frobenius homomorphism}
We single out the  case where  $A$ is a ring of positive characteristic $p>0$ and  $\varphi=F$ is the Frobenius
homomorphism. The specialization of Definition \ref{df:Skew} to this case is the following: 
 
  \vskip 2mm
 
$\bullet$ The \emph{Frobenius skew polynomial ring} of $A$ is the non-commutative graded ring $A[\Theta ;F]$; that is, the free left $A$-module with basis $\{\Theta^e\}_{e\in\N_0}$ and right multiplication given by
\[
\Theta\cdot a=a^p \Theta .
\]

\vskip 2mm

$\bullet$ The \emph{Cartier skew polynomial ring} of $A$ is the non-commutative graded ring $A[\varepsilon ;F]$; that is, the free right $A$-module with basis $\{\varepsilon^e\}_{e\in\N_0}$ and left multiplication given by
\[
a\cdot\varepsilon:=\varepsilon a^p.
\]

It turns out that  $A[\Theta; F]$  is rarely left or right Noetherian, as it is explicitly described 
 by Y.\,Yoshino in \cite[Theorem (1.3)]{Yoshino1994}. 
He also provided effective descriptions of basic examples of left modules over $A[\Theta ;F]$. Namely, the ring $A$, the localizatons
$A_a$ at elements $a\in A$ and local cohomology modules $H_I^i (A)$, where $I$ is any ideal of $A$, have an abstract structure as finitely generated left $A[\Theta ;F]$-modules that we collect in the following result. We refer to \cite[pp. 2490--2491]{Yoshino1994} for further details.

\begin{prop} \label{localizacion con Frobenius}
Let $A$ be a commutative Noetherian ring of characteristic $p$, let $a\in A$ be any element and let $J(a)$ denote the left ideal of $A[\Theta ;F]$ generated by the infinite set
\[
\{a^{p^e-1}\Theta^e -1\mid\quad e\in\N\}.
\]
Then, the following statements hold.
\begin{enumerate}[(i)]

\item $A$ has a natural structure as left $A[\Theta ;F]$-module given by
\[
A\cong A[\Theta;F]/A[\Theta;F]\langle\Theta -1\rangle \cong  A[\Theta; F]/ J(1).
\]

\end{enumerate}
Moreover, if $A$ is a local ring:
\begin{enumerate}[(i)]
\item[(ii)] The localization $A_a$ has a natural structure as left $A[\Theta;F]$-module given by

\[
A_a \cong  A[\Theta; F]/ J(a).
\]

\item[(iii)] The \v{C}ech complex of $A$ with respect to $a_1,\ldots ,a_t$
\[
0\longrightarrow A\longrightarrow\bigoplus_{i=1}^t A_{a_i}\longrightarrow\bigoplus_{1\leq i<j\leq t} A_{a_i a_j}\longrightarrow\ldots\longrightarrow\bigoplus_{i=1}^t A_{a_1\cdots\widehat{a_i}\cdots a_t}\longrightarrow A_{a_1\cdots a_t}\longrightarrow 0
\]
is a complex of left $A[\Theta ;F]$-modules which is isomorphic to the following complex:
\begin{align*}
& 0\longrightarrow A[\Theta ;F]/J(1)\longrightarrow\bigoplus_{i=1}^t A[\Theta ;F]/J(a_i)\longrightarrow\bigoplus_{1\leq i<j\leq t} A[\Theta;F]/J(a_i a_j)\longrightarrow\ldots\\
& \ldots\longrightarrow\bigoplus_{i=1}^t A[\Theta ;F]/J(a_1\cdots\widehat{a_i}\cdots a_t)\longrightarrow A[\Theta ;F]/J(a_1\cdots a_t)\longrightarrow 0.
\end{align*}

\item[(iv)] Any local cohomology module $H_I^i (A)$ has an abstract structure as a finitely generated left $A[\Theta;F]$-module.
For example, if $(A,\mathfrak{m},\K)$ is a local ring of characteristic $p$ of dimension $d\geq 1$, and  $a_1,\ldots ,a_d$ is a system of parameters for $A,$ then
\[
H_{\mathfrak{m}}^d (A)\cong A[\Theta ;F]/\left(J(a_1\cdots a_d)+A[\Theta; F]\langle a_1,\ldots ,a_d\rangle\right).
\]
\end{enumerate}

\end{prop}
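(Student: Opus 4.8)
The plan is to realise each of the four modules as a cyclic left $A[\Theta;F]$-module by exhibiting an explicit $A[\Theta;F]$-linear surjection from the free module $A[\Theta;F]$ and identifying its kernel with the prescribed left ideal. For (i) I would consider the evaluation map $A[\Theta;F]\to A$, $P\mapsto P\cdot 1$, where $A$ carries its tautological Frobenius action $\Theta\cdot a=a^p$. Since $\Theta^e\cdot 1=1$, this map sends $\sum_e c_e\Theta^e\mapsto\sum_e c_e$, so it is surjective with kernel $\{\sum_e c_e\Theta^e:\sum_e c_e=0\}$. The telescoping identity $\Theta^e-1=(\Theta^{e-1}+\cdots+\Theta+1)(\Theta-1)$ (valid because $\Theta-1$ has constant coefficients) then shows at once that this kernel equals $A[\Theta;F]\langle\Theta-1\rangle$, and since $J(1)$ is generated by $\{\Theta^e-1:e\in\mathbb{N}\}$ the same identity gives $A[\Theta;F]\langle\Theta-1\rangle=J(1)$, yielding both isomorphisms.

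For (ii) I would use the analogous evaluation map $\Phi\colon A[\Theta;F]\to A_a$, $P\mapsto P\cdot\frac{1}{a}$, so that $\Theta^e\mapsto a^{-p^e}$ and $\sum_e c_e\Theta^e\mapsto\sum_e c_e a^{-p^e}$. Surjectivity is immediate (write $b/a^n=ba^{p^e-n}/a^{p^e}$ for $p^e\ge n$), and a direct check gives $(a^{p^e-1}\Theta^e-1)\cdot\frac{1}{a}=0$, so $J(a)\subseteq\ker\Phi$. The heart of the argument is the reverse inclusion, and the key computation is the congruence
\[
\Theta^e\equiv a^{p^M-p^e}\,\Theta^M \pmod{J(a)}\qquad(0\le e\le M),
\]
which follows from the commutation rule $\Theta^e b=b^{p^e}\Theta^e$: one rewrites $a^{p^M-p^e}\Theta^M=\Theta^e\bigl(a^{p^{M-e}-1}\Theta^{M-e}\bigr)$ and subtracts the generator $a^{p^{M-e}-1}\Theta^{M-e}-1$ of $J(a)$. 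Given $P=\sum_{e\le N}c_e\Theta^e\in\ker\Phi$, clearing denominators shows $\sum_e c_e a^{p^M-p^e}=0$ in $A$ for $M$ large, and the congruence then collapses $P$ to $\bigl(\sum_e c_e a^{p^M-p^e}\bigr)\Theta^M=0$ modulo $J(a)$. Hence $\ker\Phi=J(a)$. I would note that this computation is purely formal and that the local hypothesis does not seem to enter it.

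For (iii) I would first observe that the natural localization maps $A_{a_S}\to A_{a_{S'}}$ are $A[\Theta;F]$-linear, since they are $A$-linear and commute with the $p$-power Frobenius action; thus the \v{C}ech complex is a complex of left $A[\Theta;F]$-modules. Applying (ii) to each product $a_S=\prod_{i\in S}a_i$ gives isomorphisms $A[\Theta;F]/J(a_S)\xrightarrow{\sim}A_{a_S}$, and because these are $A[\Theta;F]$-linear the resulting squares commute automatically, so they assemble into an isomorphism of complexes. Tracing the generator $\bar 1\mapsto 1/a_S=a_j/a_{S'}$ identifies each induced differential with (signed) left multiplication by the new variable $a_j$, which is exactly the stated complex.

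Finally, for (iv), the explicit top-cohomology formula drops out of (iii): with a system of parameters $a_1,\dots,a_d$ one has $H^d_{\mathfrak m}(A)=\operatorname{coker}(d^{d-1})$, and under the isomorphism $A_{a_1\cdots a_d}\cong A[\Theta;F]/J(a_1\cdots a_d)$ the image of $d^{d-1}$ becomes the left submodule generated by $a_1,\dots,a_d$, giving $H^d_{\mathfrak m}(A)\cong A[\Theta;F]/(J(a_1\cdots a_d)+A[\Theta;F]\langle a_1,\dots,a_d\rangle)$ as a quotient of a cyclic module. The main obstacle is the general abstract claim that \emph{every} $H^i_I(A)$ is finitely generated over $A[\Theta;F]$: the \v{C}ech description only presents $H^i_I(A)$ as a subquotient of finitely generated modules, and since $A[\Theta;F]$ is typically non-Noetherian one cannot conclude finite generation of submodules or kernels for free. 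For the top cohomology this is bypassed because a cokernel of finitely generated modules is automatically finitely generated, but for intermediate $i$ I would have to invoke the finer structure of these modules (the Frobenius action lets high powers of $\Theta$ absorb denominators, which is what ultimately forces finite generation), and I expect this to be the genuinely delicate point of the proposition.
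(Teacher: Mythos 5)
Your proposal cannot be compared line-by-line with an in-paper argument, because the paper does not prove this proposition: it explicitly collects these facts from Yoshino and refers to \cite[pp.~2490--2491]{Yoshino1994} for the details. Measured against that standard source, your treatment of (i)--(iii) is correct and is essentially the canonical computation. The telescoping identity handles (i); in (ii) the key congruence $\Theta^e\equiv a^{p^M-p^e}\Theta^M \pmod{J(a)}$ is legitimate precisely because $J(a)$ is a \emph{left} ideal, so $\Theta^e\bigl(a^{p^{M-e}-1}\Theta^{M-e}-1\bigr)=a^{p^M-p^e}\Theta^M-\Theta^e$ lies in $J(a)$ (using $\Theta^e a^{p^{M-e}-1}=a^{p^M-p^e}\Theta^e$), and your denominator-clearing is sound since any annihilating factor $a^k$ is absorbed by enlarging $M$ (as $\sum_e c_e a^{p^{M'}-p^e}=a^{p^{M'}-p^M}\sum_e c_e a^{p^M-p^e}$). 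Your observation that the local hypothesis never enters this computation is also accurate. One wording correction in (iii): the induced differential on cyclic quotients is \emph{right} multiplication, $\bar P\mapsto\overline{Pa_j}$ (the unique left $A[\Theta;F]$-linear map sending the generator $\bar 1$ to $a_j\bar 1$); ``left multiplication by $a_j$'' is not $A[\Theta;F]$-linear since $a_j$ and $\Theta$ do not commute. Its well-definedness is the identity $\bigl(a_S^{p^e-1}\Theta^e-1\bigr)a_j=a_j\bigl((a_Sa_j)^{p^e-1}\Theta^e-1\bigr)\in J(a_Sa_j)$, which is worth recording.

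The one genuine gap is the blanket claim opening (iv), that \emph{every} $H^i_I(A)$ is finitely generated over $A[\Theta;F]$, and you have diagnosed it exactly right: your cokernel argument settles only the top-cohomology display (a cokernel of a map between finitely generated modules), whereas for intermediate $i$ the \v{C}ech presentation exhibits $H^i_I(A)$ only as a subquotient, and since $A[\Theta;F]$ is rarely left Noetherian (Yoshino's Theorem (1.3), quoted in this very paper) the cocycle submodule $Z^i$ is not finitely generated for free. Your closing heuristic --- that high powers of $\Theta$ absorb denominators --- gestures at the right mechanism but is not developed into a proof; note that writing a cocycle as an $A$-combination of $\Theta^e\cdot(1/a_S)$ only re-expresses membership in the cyclic modules and does not bound a generating set for $Z^i$. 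Since the paper itself delegates precisely this point to \cite[pp.~2490--2491]{Yoshino1994}, a complete write-up should either import that result by citation or supply the missing argument; everything else in your proposal is a correct, self-contained proof of (i), (ii), (iii), and of the displayed isomorphism for $H^d_{\mathfrak m}(A)$ in (iv).
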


\vskip 2mm

In this section we are going to give a natural generalization of skew polynomial rings associated to a ring homomorphism $\varphi$.
Before doing so, we briefly recall how to give a module structure over a skew polynomial ring by means of the so-called
$\varphi$ and $\varphi^{-1}$-linear maps.

\vskip 2mm

\begin{df}
Let $A$ be a commutative Noetherian ring,  $\varphi: A \longrightarrow A$ a ring homomorphism and 
 $M$ be an $A$-module. Given $\psi, \phi \in\End_A (M):$

\begin{enumerate}[(i)]

\item We say that $\psi$ is $\varphi$-\emph{linear}\index{map!$\varphi$-linear} provided $\psi (am)=\varphi (a)\psi (m)$ for any $(a,m)\in A\times M$.

\item We say that $\phi$ is $\varphi^{-1}$-\emph{linear}\index{map!$\varphi^{-1}$-linear} provided $\phi (\varphi(a)m)=a\phi (m)$ for any $(a,m)\in A\times M$.
\end{enumerate}

\end{df}
\vskip 2mm

We denote by $\End_{\varphi} (M)$ and  $\End_{\varphi^{-1}} (M)$ the $A$-endomorphisms of $M$ which are $\varphi$-linear and
 $\varphi^{-1}$-linear respectively. These endomorphisms can be interpreted in terms of the pushforward functor since we have the following bijections:
\begin{align*}
& \xymatrix{\End_{\varphi} (M)\ar[r] & \Hom_A (M,\varphi_* M)} ,  & \hskip 2mm & \End_{\varphi^{-1}} (M)\longrightarrow\Hom_A (\varphi_* M,M) \\
& \hskip 5mm \psi\longmapsto [m\longmapsto\varphi_* (\psi (m))] &   \hskip 2mm &   \hskip 5mm \psi\longmapsto [\varphi_* m\longmapsto\psi (m)]
\end{align*}

\vskip 2mm

An $A[\Theta ;\varphi]$-module is simply an $A$-module $M$ together with a suitable action of $\Theta$ on $M$. Actually,
one only needs to consider a $\varphi$-linear  map $\psi: M \longrightarrow M$ and define $\Theta \cdot m := \psi(m)$ for all
$m\in M$. 
Analogously, an $A[\varepsilon ;\varphi]$-module is an $A$-module $M$ together
with an action of $\varepsilon$ given by
a $\varphi^{-1}$-linear map $\phi: M \longrightarrow M;$
we record all these simple remarks into the following: 

\begin{prop} Let $M$ be an $A$-module. 

\begin{enumerate}[(i)]

\item There is a bijective correspondence between $\End_{\varphi} (M)$ and the left $A[\Theta; \varphi]$-module structures which can be attached to $M$.\index{ring!skew polynomial|)}

\vskip 2mm

\item There is a bijective correspondence between $\End_{\varphi^{-1}} (M)$ and the left $A[\varepsilon; \varphi]$-module structures which can be attached to $M$.\index{ring!skew polynomial|)}

\end{enumerate}

\end{prop}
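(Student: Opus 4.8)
The plan is to exploit the universal property stated just above by viewing a left module structure on $M$ that refines its given $A$-module structure as a ring homomorphism into the ring $\End_{\mathbb{Z}}(M)$ of additive endomorphisms of $M$ under composition. Throughout, let $\rho\colon A\to\End_{\mathbb{Z}}(M)$ denote the structure homomorphism encoding the fixed $A$-module structure, so that $\rho(a)(m)=am$. Since $A$ embeds in both $A[\Theta;\varphi]$ and $A[\varepsilon;\varphi]$, by a module structure \emph{attached to $M$} I mean one whose restriction to $A$ is the fixed structure $\rho$, equivalently a ring homomorphism out of the skew polynomial ring extending $\rho$.

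For part (i), I would first send a $\varphi$-linear map $\psi\in\End_{\varphi}(M)$---which is in particular additive, hence lies in $\End_{\mathbb{Z}}(M)$---to the element $y:=\psi$ and observe that its defining identity $\psi(am)=\varphi(a)\psi(m)$ reads, in $\End_{\mathbb{Z}}(M)$, as $\psi\,\rho(a)=\rho(\varphi(a))\,\psi$. This is exactly the commutation relation $y\,\rho(a)=\rho(\varphi(a))\,y$ required by the universal property (with $T=\End_{\mathbb{Z}}(M)$ and the ring homomorphism $A\to T$ taken to be $\rho$). Hence there is a unique ring homomorphism $A[\Theta;\varphi]\to\End_{\mathbb{Z}}(M)$ sending $\Theta$ to $\psi$ and extending $\rho$, which is precisely a left $A[\Theta;\varphi]$-module structure attached to $M$ with $\Theta\cdot m=\psi(m)$. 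Conversely, from any such structure I would read off the additive map $\psi(m):=\Theta\cdot m$ and verify $\varphi$-linearity directly from the relation $\Theta a=\varphi(a)\Theta$, since $\psi(am)=(\Theta a)\cdot m=(\varphi(a)\Theta)\cdot m=\varphi(a)\psi(m)$.

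It then remains to check that these two assignments are mutually inverse. Passing from $\psi$ to a module structure and back recovers $\psi$ by construction, as $\Theta$ acts by $\psi$. For the reverse composite, starting from a structure, extracting $\psi$, and rebuilding produces a ring homomorphism $A[\Theta;\varphi]\to\End_{\mathbb{Z}}(M)$ agreeing with the original on $A$ and on $\Theta$; the uniqueness clause of the universal property forces it to equal the original, so the rebuilt structure coincides with it. Part (ii) is entirely parallel: a $\varphi^{-1}$-linear map $\phi$ satisfies $\phi(\varphi(a)m)=a\phi(m)$, which rewrites in $\End_{\mathbb{Z}}(M)$ as $\rho(a)\,\phi=\phi\,\rho(\varphi(a))$, precisely the hypothesis $\rho(a)\,y=y\,\rho(\varphi(a))$ of the universal property for the right skew polynomial ring; the same two-way construction and uniqueness argument then applies with $\varepsilon$ in place of $\Theta$.

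The single point genuinely requiring care, and the step I expect to be the main obstacle, is the well-definedness of the structure attached to $\psi$: that declaring $\Theta\cdot m=\psi(m)$ is consistent with the defining relation of $A[\Theta;\varphi]$ rather than yielding an inconsistent action on products such as $a\Theta^{i}$. Routing through the universal property dispenses with this cleanly, since it constructs the structure as a genuine ring homomorphism; the direct alternative would set $(\sum_i a_i\Theta^i)\cdot m=\sum_i a_i\psi^i(m)$ and verify relation-compatibility by hand, which the universal property lets me avoid.
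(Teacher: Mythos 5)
Your proof is correct, and both directions of the correspondence check out: the identity $\psi(am)=\varphi(a)\psi(m)$ does translate in $\End_{\mathbb{Z}}(M)$ to exactly the commutation hypothesis $y\rho(a)=\rho(\varphi(a))y$ of the universal property for $A[\Theta;\varphi]$, and likewise $\phi(\varphi(a)m)=a\phi(m)$ translates to $\rho(a)y=y\rho(\varphi(a))$, which is the hypothesis in the right skew polynomial case governing left $A[\varepsilon;\varphi]$-structures. Your route, however, is more formal than what the paper does: the paper offers no proof at all, presenting the proposition as the record of "simple remarks," with the implicit argument being the bare observation that one defines $\Theta\cdot m:=\psi(m)$ (equivalently, one extends the action over the free left $A$-basis $\{\Theta^i\}$ by $\bigl(\sum_i a_i\Theta^i\bigr)\cdot m:=\sum_i a_i\psi^i(m)$ and checks compatibility with the relation $\Theta a=\varphi(a)\Theta$ directly). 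Packaging the module structure as a ring homomorphism $A[\Theta;\varphi]\to\End_{\mathbb{Z}}(M)$ extending $\rho$ and invoking the universal property, as you do, buys you precisely the point you identify as delicate: well-definedness and multiplicativity of the extended action come for free, and the uniqueness clause gives the mutual inversion of the two assignments without any computation on general elements $\sum_i a_i\Theta^i$. The only caveat worth making explicit is the one you already handle: "module structure attached to $M$" must mean a structure restricting to the fixed $A$-action, i.e., a homomorphism extending $\rho$, since otherwise the correspondence with $\End_{\varphi}(M)$ (which is defined relative to that fixed action) would not be well posed.
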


\vskip 2mm

More generally, we may consider the $e$-th powers $\varphi^e$ of the ring homomorphism $\varphi$
and define the corresponding notion of  $\varphi^e$ and $\varphi^{-e}$-linear maps. 
We may collect all these morphisms in a suitable algebra that would provide a generalization 
of the  Frobenius algebra introduced by G.\,Lyubeznik and K.\,E.\,Smith in \cite[Definition 3.5]{LyubeznikSmith2001}
and the Cartier algebra considered by K.\, Schwede \cite{Schwede2011}  and generalized by M.\, Blickle \cite{Blickle2013}
(see also \cite{BlickleSchwede2013}).

\begin{df} 
Let $A$ be a commutative Noetherian ring and $\varphi: A \longrightarrow A$ a ring homomorphism.
Let $M$ be an $A$-module. Then we define:
\vskip 2mm 

$\bullet$ \emph{Ring of $\varphi$-linear operators} on $M$: Is the associative, $\mathbb{N}$-graded, not necessarily commutative ring
\[
\cF^{M ,\varphi} :=\bigoplus_{e\geq 0}\cF_e^{M, \varphi},
\]
where $\cF_e^{M, \varphi}:= \Hom_A (M, \varphi_*^e M)= \End_{\varphi^{e}} (M).$ 
A product is defined as
$$
\psi_{e'}\cdot\psi_e:=\varphi_*^e \left(\psi_{e'}\right)\circ\psi_e\in\cF_{e+e'}^{M,\varphi}.
$$
for any given $\psi_e\in\cF_e^{M, \varphi}$ and $\psi_{e'}\in\cF_{e'}^{M , \varphi}$.

\vskip 2mm 

$\bullet$ \emph{Ring of $\varphi^{-1}$-linear operators} on $M$: Is the associative, $\mathbb{N}$-graded, not necessarily commutative ring
\[
\cC^{M,\varphi}:= \bigoplus_{e\geq 0} \cC_e^{M, \varphi},   
\]
where $\cC_e^{M, \varphi}:= \Hom_A (\varphi_*^e M,M)= \End_{\varphi^{-e}} (M).$

A product is defined as
$$
\phi_{e'}\cdot\phi_e:=\phi_{e'}\circ \varphi_*^{e'}(\phi_e) \in \cC_{e+e'}^{M,\varphi}
$$
for any given $\phi_e\in\cC_e^{M, \varphi}$ and $\phi_{e'}\in\cC_{e'}^{M, \varphi}$.

\end{df}

\vskip 2mm

Actually, the generalization of Cartier algebra given by M.\, Blickle in \cite{Blickle2013} would be interpreted in our context as follows:

\begin{df}
An $A$-\emph{Cartier algebra with respect to $\varphi$} is an $\N$-graded $A$-algebra
\[
\cC^{\varphi}:=\bigoplus_{e\geq 0}\cC_e^{\varphi}
\]
such that, for any $(a,\phi_e)\in A\times\cC_e^{\varphi}$, we have that $a\cdot\phi_e =\phi_e\cdot\varphi^e (a)$. The $A$-algebra structure of $\cC^{\varphi}$ is given by the natural map from $A$ to $\cC_0^{\varphi}$. 
We also assume that the structural map $\xymatrix@1{A\ar[r]& \cC_0^{\varphi}}$ is surjective.
\end{df}

\vskip 2mm

We point out that $\cC^{M,\varphi}$ is generally NOT an $A$-Cartier algebra with respect to $\varphi$, since $\cC_0^{M, \varphi}=\End_A (M)$ and therefore the natural map $\xymatrix@1{A\ar[r]& \End_A (M)}$ is, in general, not surjective. Nevertheless, if $M=A/I$ (where $I$ is any ideal of $A$) then $\End_A (M)=A/I$ and therefore it follows that $\cC^{A/I, \varphi}$ is an $A$-Cartier algebra.

\vskip 2mm

Whenever the ring of $\varphi$-linear  (resp. $\varphi^{-1}$-linear)  operators
is principally generated it is isomorphic to a left (resp. right) skew polynomial ring. This is the case of the ring of 
$\varphi$-linear operators of the ring $A;$ notice that, when $A$ is of prime characteristic
and $\varphi=F$ is the Frobenius map, this was already observed by
G.\,Lyubeznik and K.\,E.\,Smith \cite[Example 3.6]{LyubeznikSmith2001}.

\begin{ex}
We have that $\cF^{A, \varphi}\cong A[\Theta ; \varphi]$. Indeed, fix $e\in\N$ and let $\psi_e\in\cF_e^{A, \varphi}$. We point out that, for any $a\in A$,
\[
\psi_e (a)=\psi_e (a\cdot 1)=\varphi^e (a)\psi_e (1)=\psi_e (1) \varphi^e (a).
\]
In this way, set
\begin{align*}
& \xymatrix{\cF_e^{A, \varphi} \ar[r]^-{b_e}& A\Theta^e}\\
& \psi_e\longmapsto\psi_e (1) \Theta^e .
\end{align*}
The previous straightforward calculation shows the injectivity of this map. In fact, it is a bijective map with inverse
\begin{align*}
& \xymatrix{A\Theta^e\ar[r]& \cF_e^{A, \varphi}}\\
& a\Theta^e\longmapsto a\varphi^e.
\end{align*}
In this way, setting $\xymatrix@1{\cF^{A, \varphi}\ar[r]^-{b}& A[\Theta ; \varphi]}$ as the unique map of rings given in degree $e$ by $b_e$ it follows that $b$ is an isomorphism of graded algebras.
\end{ex}

\subsubsection{Duality between Cartier algebras and Frobenius algebras}

 Let $A=\mathbb{K}[\![x_1,\ldots,x_d]\!]$ be a formal power series ring with $d$ indeterminates over a  field $\mathbb{K}$,
  $I \subseteq A$ an ideal and $R:=A/I$. 
 Let $\varphi : A \longrightarrow A$ be a ring homomorphism such that $\varphi_*^e A$ is finitely generated as $A$-module.
The aim of this subsection is to establish an explicit  correspondence, at the level of graded pieces, 
between $\cC^{R,\varphi}$ and $\cF^{E_R, \varphi}$ given by Matlis duality. This would extend 
the correspondence given in the case of the Frobenius map  (cf.\,\cite[Proposition 5.2]{BlickleBockle2011} and \cite[Theorem 1.20 and
Corollary 1.21]{SharpYoshino2011}). 

\begin{teo}\label{connectionCartierFrobenius}
Let $\varphi : A \longrightarrow A$ be a ring homomorphism such that $\varphi_*^e A$ is finitely generated as $A$-module. Then we have that
\[
\Hom_A (\varphi_*^e R,R)^{\vee}\cong\Hom_A (E_R,\varphi_*^e E_R)\quad\text{and}\quad\Hom_A(E_R, \varphi_*^e E_R)^{\vee}\cong\Hom_A (\varphi_*^e R,R).
\]

\end{teo}
Before proving this theorem we have to show a previous statement which we shall need during its proof; albeit the below result was obtained by F.\,Enescu and M.\,Hochster in \cite[Discussion (3.4)]{EnescuHochster2008} (see also \cite[Lemma (3.6)]{Yoshino1994}), we review here their proof for the convenience of the reader.

\begin{lm}\label{envolvente inyectiva y cambio de base}
Let $\xymatrix@1{(A,\mathfrak{m},\mathbb{K})\ar[r]& (B,\mathfrak{n},\mathbb{L})}$ be a local homomorphism of local rings, and suppose that $\mathfrak{m}B$ is $\mathfrak{n}$-primary and that $\mathbb{L}$ is finite algebraic over $\mathbb{K}$ (both these conditions hold if $B$ is module-finite over $A$). Let $E:=E_A (\mathbb{K})$ and $E_B(\mathbb{L})$ denote choices of injective hulls for $\mathbb{K}$ over $A$ and for $\mathbb{L}$ over $B$, respectively. Then, the functor $\Hom_A (-,E)$, on $B$-modules, is isomorphic with the functor $\Hom_B (-,E_B(\mathbb{L}))$.
\end{lm}

\begin{proof}
First of all, we underline that $\Hom_A (-,E)$, on $B$-modules, can be identified via adjunction with
\[
\Hom_A \left((-)\otimes_A B, E\right)\cong\Hom_B (-,\Hom_A (B,E))
\]
and therefore $\Hom_A (B,E)$ is injective as $B$-module. Moreover, as $\mathfrak{m}B$ is $\mathfrak{n}$-primary any element of $\Hom_A (B,E)$ is killed by a power of $\mathfrak{n}$ and therefore
\[
\Hom_A (B, E)\cong E_B (\mathbb{L})^{\oplus l}.
\]
In this way, it only remains to check that $l=1$. Indeed, we note that
\[
\Hom_A (\mathbb{L},E)\cong\Hom_A (\mathbb{L},\mathbb{K}).
\]
However, as $A$-module, $\Hom_A (\mathbb{L},\mathbb{K})$ is abstractly isomorphic to $\mathbb{L}$ (here we are using the assumption that $\mathbb{L}$ is finite algebraic over $\mathbb{K}$). Thus, all these foregoing facts imply that
\[
E_B (\mathbb{L})\cong\Hom_A (B,E),
\]
hence $\Hom_A \left((-)\otimes_A B, E\right)\cong\Hom_B (-,E_B (\mathbb{L}))$ and we get the desired conclusion.
\end{proof}

\begin{proof}[Proof of Theorem \ref{connectionCartierFrobenius}]
First of all, we underline that
\[
\Hom_A (\varphi_*^e R,R)^{\vee}\cong\Hom_A (E_R,\varphi_*^e (R)^{\vee}).
\]
Now, let $E_*$ be the injective hull of the residue field of $\varphi_*^e R$. In this way, from Lemma \ref{envolvente inyectiva y cambio de base} we deduce that $\Hom_A (-,E)\cong\Hom_{\varphi_*^e A}(-,E_*)$ as functors of $\varphi_*^e A$-modules. Therefore, combining all these facts joint with the exactness of $\varphi_*^e$ it follows that
\[
\varphi_*^e(R)^{\vee}\cong\Hom_A (\varphi_*^eR,E)\cong\Hom_{\varphi_*^eA}(\varphi_*^e R,E_*)\cong \varphi_*^e\Hom_A(R,E)\cong\varphi_*^e E_R.
\]
Thus, taking into account this last chain of isomorphisms one obtains the first desired conclusion.

On the other hand, using once more Lemma \ref{envolvente inyectiva y cambio de base} it turns out that
\[
\varphi_*^e (E_R)^{\vee}\cong\Hom_A (\varphi_*^e E_R, E)\cong\Hom_{\varphi_*^e A} (\varphi_*^e E_R, \varphi_*^e E)\cong \varphi_*^e (E_R^{\vee})\cong \varphi_*^e R.
\]
Thus, bearing in mind this last chain of isomorphisms it follows that
\[
\Hom_A (E_R, \varphi_*^e E_R)^{\vee}\cong\Hom_A (\varphi_*^e(E_R)^{\vee},E_R^{\vee})\cong\Hom_A (\varphi_*^e R,R),
\]
just what we finally wanted to show.
\end{proof}

\subsubsection{The case of Frobenius homomorphism}

Once again we single out the  case where  $A$ is a ring of positive characteristic $p>0$ and  $\varphi=F$ is the Frobenius
homomorphism. In this case we adopt the terminology of $p^e$ and $p^{-e}$-linear maps or, following \cite{Anderson2000}, 
\emph{Frobenius } and \emph{Cartier} linear maps.

\begin{df}
Let $M$ be an $A$-module and $\psi, \phi \in\End_A (M)$.

\begin{enumerate}[(i)]

\item We say that $\psi$ is $p^e$-\emph{linear} provided $\psi (am)=a^{p^e} \psi (m)$ for any $(a,m)\in A\times M$. Equivalently, $\psi\in\Hom_A (M,F_*^e M)$.

\vskip 2mm

\item We say that $\phi$ is $p^{-e}$-\emph{linear} provided $\phi (a^{p^e}m)=a \phi (m)$ for any $(a,m)\in A\times M$. Equivalently, $\phi\in\Hom_A (F_*^e M,M)$.

\end{enumerate}
\end{df}

\vskip 2mm

The corresponding rings of $p^e$ and $p^{-e}$-linear maps are defined as follows:

\vskip 2mm

\begin{df}\label{definicion oficial de algebra de Cartier}
Let $A$ be a commutative Noetherian ring of prime characteristic $p$ and let $M$ be an $A$-module.

\begin{enumerate}[(i)]

\item The \emph{Frobenius algebra} attached to $M$ is the associative, $\N$-graded, not necessarily commutative ring
\[
\cF^M:=\bigoplus_{e\geq 0} \Hom_A \left(M,F_*^e M\right).
\]

\item The \emph{Cartier algebra} attached to $M$ is the associative, $\N$-graded, not necessarily commutative ring
\[
\cC^M:=\bigoplus_{e\geq 0} \Hom_A \left(F_*^e M, M\right).
\]

\end{enumerate}

\end{df}

In this work we are mainly interested in the case where the Frobenius  (resp. Cartier) algebra of a module is principally generated and thus
isomorphic to the left (resp. right) skew polynomial ring. G.\,Lyubeznik and K.\,E.\,Smith 
already carried out such an example in \cite[Example 3.7]{LyubeznikSmith2001}.
\begin{ex}\label{computation of Frobenius algebra of top local cohomology module}
Let $(A,\mathfrak{m},\mathbb{K})$ be a local ring of characteristic $p$. Then
\[
\cF^{H_{\mathfrak{m}}^{\dim (A)}(A)}\cong S[\Theta ;F],
\]
where $S$ denotes the $S_2$-ification of the completion $\widehat{A}$. 
\end{ex}

Another source of examples is given by the following result.
\begin{prop}\label{criterio de principal generacion}
Let $(A,\mathfrak{m},\mathbb{K})$ be a complete $F$-finite local ring of characteristic $p$ and $E_A$ will stand for a choice of injective hull of $\K$ over $A$. Then, the following statements hold.
\begin{enumerate}[(i)]

\item If $A$ is quasi Gorenstein then $\cF^{E_A}$ is principal.

\item If $A$ is normal then $\cF^{E_A}$ is principal if and only if $A$ is Gorenstein.

\item If $A$ is a $\Q$-Gorenstein normal domain then $\cF^{E_A}$ is a finitely generated $A$-algebra if and only if $p$ is relatively prime with the index of $A$.

\item  If $A$ is a $\Q$-Gorenstein normal domain then $\cF^{E_A}$ is principal if and only if the index of $A$ divides $p-1$.

\end{enumerate}
\end{prop}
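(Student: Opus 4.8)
The plan is to reduce all four statements to a single computation of the graded pieces $\cF_e^{E_A}=\Hom_A(E_A,F_*^e E_A)$ and then to translate principal and finite generation into numerical conditions on the class of the canonical divisor $K_A$ in the divisor class group $\mathrm{Cl}(A)$. First I would exploit that $A$ is complete and $F$-finite: the Frobenius $F\colon A\to F_*A$ is then module-finite, so Lemma \ref{envolvente inyectiva y cambio de base} applies to it and, arguing exactly as in the proof of Theorem \ref{connectionCartierFrobenius} (now with $R=A$ and $\varphi=F$), yields a graded isomorphism between $\cF^{E_A}$ and the opposite of the Cartier algebra $\cC^{A}=\bigoplus_{e\ge 0}\Hom_A(F_*^e A,A)$. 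Since an $\N$-graded $A$-algebra is principally generated (respectively module-finite) if and only if its opposite is, it suffices to analyse $\cC^A$.

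The key computation is the divisorial description of $\cC_e^{A}=\Hom_A(F_*^e A,A)$. Using that $A$ is normal, so that $\Hom$ into a rank-one reflexive module is again reflexive, together with Grothendieck duality for the finite morphism $F^e$ — which identifies $\Hom_A(F_*^e A,\omega_A)$ with the canonical trace $F_*^e\omega_A$ — I would obtain the standard isomorphism of $A$-modules
\[
\cC_e^{A}\cong F_*^e\, A\big((1-p^e)K_A\big),
\]
where $A(D)$ denotes the rank-one reflexive module attached to a Weil divisor $D$ and $K_A$ is a canonical divisor. The multiplication of $\cC^A$ is compatible with the additivity relation $(1-p^{e+e'})K_A=(1-p^{e'})K_A+p^{e'}(1-p^e)K_A$, so $\cC^A$ is precisely the Frobenius-twisted divisorial (symbolic-Rees-type) algebra determined by the single class $[K_A]$.

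With this description the four statements become assertions about $[K_A]$. For (i), quasi-Gorenstein means $\omega_A\cong A$, i.e. $K_A\sim 0$; then every $\cC_e^{A}\cong F_*^e A$ is free of rank one and a generator of $\cC_1^{A}$ together with its powers fills the whole algebra, so $\cC^A$, hence $\cF^{E_A}$, is principal. For (iv), if $A$ is a $\Q$-Gorenstein normal domain of index $r$ then $[K_A]$ has order $r$ in $\mathrm{Cl}(A)$, and $\cC^A$ is principal exactly when each $\cC_e^{A}\cong A\big((1-p^e)K_A\big)$ is free and the degree-one part generates in every degree, which holds if and only if $(p^e-1)K_A\sim 0$ for all $e$, i.e. if and only if $r\mid p-1$. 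For (iii), the finite generation of the divisorial algebra attached to the torsion class $[K_A]$ of order $r$ is controlled by the behaviour of the residues $p^e\bmod r$: when $\gcd(p,r)=1$ these are eventually periodic and the algebra is module-finite, whereas when $\gcd(p,r)>1$ the classes $(p^e-1)[K_A]$ fail to stabilise and finite generation breaks down, giving the stated equivalence. Finally, (ii) combines these: the implication ``Gorenstein $\Rightarrow$ principal'' is (i) (as Gorenstein is quasi-Gorenstein), while conversely principal generation forces the reflexive module $\cC_1^{A}\cong A\big((1-p)K_A\big)$ to be free, and a closer analysis of how the twisted multiplication propagates this freeness through all degrees, under the normality hypothesis, is what forces $K_A$ to be trivial.

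The hard part will be the divisorial identification of the second step: one must pass from the abstract $\Hom$-modules to reflexive divisorial modules with full control both of the Frobenius-twisted $A$-module structure and of the induced multiplication, because principal generation is sensitive not merely to each graded piece being free but to the degree-one generator actually generating in every degree. Closely tied to this is the finite-generation criterion (iii), whose proof rests on the theory of finite generation of symbolic Rees algebras of a torsion divisor under the $p^e$-scaling; pinning down $\gcd(p,r)=1$ as the exact threshold, and correctly reconciling it with the sharper principality threshold $r\mid p-1$ underlying (ii) and (iv), is the technical heart of the argument.
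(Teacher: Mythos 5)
Your route is genuinely different from the paper's. The paper disposes of this proposition in a few lines: part (i) is reduced, via $E_A\cong H_{\mathfrak{m}}^{\dim (A)}(A)$ for quasi-Gorenstein rings, to the Lyubeznik--Smith computation recorded in Example \ref{computation of Frobenius algebra of top local cohomology module} (with $S=A$ because $A$ is complete and quasi-Gorenstein rings are $S_2$), while parts (ii)--(iv) are simply quoted from Blickle \cite[Example 2.7]{Blickle2013}, Katzman--Schwede--Singh--Zhang \cite[Proposition 4.3]{KatzmanSchwedeSinghZhang2013} and Enescu--Yao \cite[Theorem 4.5]{EnescuYao2014}. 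You instead propose to reprove those cited results from scratch, through the Matlis-duality identification of $\cF^{E_A}$ with $(\cC^{A})^{op}$ and the divisorial description $\cC_e^{A}\cong F_*^e A\left((1-p^e)K_A\right)$; this is indeed the strategy underlying the cited sources, and your numerical thresholds ($r\mid p-1$ for principality, $\gcd (p,r)=1$ for finite generation) are the correct ones. What your unified approach would buy, if completed, is a self-contained proof; what the paper buys by citing is brevity and the avoidance of exactly the technical work you defer.

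That deferral is where the genuine gaps lie. First, part (i) does not assume normality, so the divisor-class framework on which you base everything does not literally apply there; the quasi-Gorenstein case must be handled directly via $\omega_A\cong A$ and duality for the finite map $F^e$ (or, as the paper does, via $E_A\cong H^{\dim (A)}_{\mathfrak{m}}(A)$). Second, in (ii) your divisorial computation only shows that principal generation forces $\cC_1^{A}\cong A\left((1-p)K_A\right)$ to be cyclic, hence $(p-1)K_A\sim 0$; that makes $[K_A]$ torsion, not trivial, and the promised ``closer analysis'' forcing $K_A\sim 0$ is never supplied --- note moreover that Gorenstein includes Cohen--Macaulayness, which no identification of the graded pieces as abstract modules can detect, so your plan for (ii) cannot close as stated. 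Third, in (iii) the assertion that for $\gcd(p,r)>1$ ``the classes $(p^e-1)[K_A]$ fail to stabilise and finite generation breaks down'' is not an argument: non-stabilisation of divisor classes does not by itself preclude the multiplication maps from a fixed bounded range of degrees being eventually surjective, and ruling this out is precisely the content of the Enescu--Yao theorem the paper cites; it needs a genuine proof that the reflexive multiplication maps fail to surject in infinitely many degrees. A minor further point: Theorem \ref{connectionCartierFrobenius} of the paper provides only isomorphisms of graded pieces, so the claim that Matlis duality yields an anti-isomorphism of \emph{algebras} (compatibility with the twisted products), which your whole reduction relies on, is itself a step you use without checking; it is true, but it is the content of the Sharp--Yoshino and Blickle--B\"ockle results rather than of anything proved in this paper.
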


\begin{proof}
If $A$ is quasi Gorenstein then $E_A\cong H_{\mathfrak{m}}^{\dim (A)}(A)$. But we have seen in Example \ref{computation of Frobenius algebra of top local cohomology module} that, under our assumptions, $\cF^{H_{\mathfrak{m}}^{\dim (A)}(A)}\cong A[\Theta ;F]$; indeed, $A$ is complete and any quasi Gorenstein ring is, in particular, $S_2$. The second part is proved in \cite[Example 2.7]{Blickle2013}. On the other hand, part (iii) follows combining \cite[Proposition 4.3]{KatzmanSchwedeSinghZhang2013} and \cite[Theorem 4.5]{EnescuYao2014}; finally, part (iv) also follows from \cite[Proposition 4.3]{KatzmanSchwedeSinghZhang2013}.
\end{proof}

We point out that an explicit description of   $\cF^{E_A}$ can be obtained using the following result due to R.\,Fedder (cf.\,\cite[pp.\,465]{Fedder1983}).

\begin{teo}\label{el teorema de Fedder}
 Let $A=\K [\![x_1, \ldots ,x_d]\!]$ be a formal power series ring over a field $\K$ of prime characteristic $p$. Let $I$ be an arbitrary ideal of $A$. Then, one has that
\[
\cF^{E_R}\cong\bigoplus_{e\geq 0}\left\{(I^{[p^e]}:_A I)/I^{[p^e]}\right\}\Theta^e ,
\]
where $E$ denotes a choice of injective hull of $\K$ over $A$, $R:=A/I$, $\Theta$ is the standard Frobenius action on $E$ and $E_R:=(0:_E I)$.
\end{teo}

In \cite{AlvarezBoixZarzuela2012} we used this result to study Frobenius  algebras associated to Stanley-Reisner rings.
It turns out that, whenever they are principally generated, they are isomorphic to $A[u\Theta ;F]$ with  $u=x_1^{p-1}\cdots x_d^{p-1}$.

\section{The $\varphi$-Koszul chain complex}\label{our own Koszul complex}
Let  $S=\K [x_1,\ldots ,x_n]$ be the polynomial ring with coefficients on a commutative ring $\K,$ and
let $\varphi: S \longrightarrow S$ be a flat map of $\K$--algebras
satisfying the extra condition that for any $1\leq i\leq n$, $\varphi (x_i)\in\langle x_i\rangle$. Thus, 
there are non--zero elements $s_1,\ldots, s_n$ of $S$ such that $\varphi (x_i)=s_i x_i,$  for each $1\leq i\leq n$.

\vskip 2mm

\begin{rk}
If $\K$ is a field of positive characteristic $p>0$  and $\varphi=F^e$ is the iterated Frobenius morphism,
then this extra condition is naturally satisfied. Indeed, $F^e(x_i)= x_i^{p^e}$ so $s_i= x_i^{p^e-1}$.
More generally \cite[Example 2.2]{SinghWalther2007}, if $\K$ is any field and $t\geq 1$ is an integer, then the
$\K$--linear map $\varphi$ on $S$ sending each $x_i$ to $x_i^t$ also satisfies
this condition; indeed, in this case, $s_i=x_i^{t-1}.$
\end{rk}

\vskip 2mm

Our aim is to construct a Koszul complex in the category of $S[\Theta; \varphi]$ -modules  that we will denote as $\varphi$-Koszul complex.
To begin with, let us fix some notations. 
Let 
\[
K(x_1,\dots,x_n):= \xymatrix{0\ar[r]& K_n\ar[r]^-{d_n}& K_{n-1}\ar[r]& \ldots \ar[r]& K_2\ar[r]^-{d_2}\ar[r]& K_1\ar[r]^-{d_1}& K_0\ar[r]& 0}
\]
be the Koszul chain complex of $S$ with respect to $x_1,\ldots ,x_n$ (regarded as a chain complex in the category of left $S$-modules) and suppose that each differential $d_l$ is represented by right multiplication by matrix $M_l$.  
Moreover, for each $l\geq 0$ $M_l^{[\varphi]}$ denotes the matrix obtained by applying to each entry of $M_l$ the map 
$\varphi$. In particular,  for each entry $m,$ the sign of $m$ is equal to the sign of $\varphi (m).$

\begin{df}\label{el complejo de Frobenius-Koszul}
We define the \emph{$\varphi$-Koszul chain complex} with respect to $x_1,\ldots ,x_n$ as the chain complex
\[
\FK_{\bullet}(x_1,\dots,x_n):=\xymatrix{0\ar[r]& \FK_{n+1}\ar[r]^-{\partial_{n+1}}& \FK_n\ar[r]^-{\partial_n}& \ldots\ar[r]^-{\partial_1}& \FK_0\ar[r]&  0.}
\]
Here, for each $0\leq l\leq n+1$,
\begin{align*}
\FK_l:=& \bigoplus_{1\leq i_1<\ldots<i_l\leq n}S[\Theta;\varphi](\mathbf{e}_{i_1}\wedge\ldots\wedge\mathbf{e}_{i_l}) \hskip 2mm \oplus\bigoplus_{1\leq j_1<\ldots<j_{l-1}\leq n}S[\Theta;\varphi](\mathbf{e}_{j_1}\wedge\ldots\wedge\mathbf{e}_{j_{l-1}}\wedge u),
\end{align*}
where $\mathbf{e}_1,\ldots ,\mathbf{e}_n$ corresponds respectively to $x_1,\ldots ,x_n$ and $u$ corresponds to $\Theta-1$. Here, we are adopting the convention that $\FK_0:=S[\Theta;\varphi]$.

Moreover, one defines $\xymatrix@1{\FK_l\ar[r]^-{\partial_l}& \FK_{l-1}}$ as the unique homomorphism of left $S[\Theta;\varphi]$-modules which, on basic elements, acts in the following manner: 

\vskip 2mm

$\cdot$ Given $1\leq i_1<\ldots<i_l\leq n$, set
\[
\partial_l \left(\mathbf{e}_{i_1}\wedge\ldots\wedge\mathbf{e}_{i_l}\right):=\sum_{r=1}^l (-1)^{r-1}x_{i_r}\left(\mathbf{e}_{i_1}\wedge\ldots\wedge\mathbf{e}_{i_{r-1}}\wedge\mathbf{e}_{i_{r+1}}\wedge\mathbf{e}_{i_{r+2}}\wedge\ldots\wedge\mathbf{e}_{i_l}\right).
\]
$\cdot$ Given $1\leq j_1<\ldots<j_{l-1}\leq n$, set
\begin{align*}
& \partial_l \left(\mathbf{e}_{j_1}\wedge\ldots\wedge\mathbf{e}_{j_{l-1}}\wedge u\right):= (-1)^{l-1}\left(\Theta-(s_{j_1}\cdots s_{j_{l-1}})\right)\left(\mathbf{e}_{j_1}\wedge\ldots\wedge\mathbf{e}_{j_{l-1}}\right)\\ & +\sum_{r=1}^{l-1}(-1)^{r-1}\varphi(x_{j_r})\left(\mathbf{e}_{j_1}\wedge\ldots\wedge\mathbf{e}_{j_{r-1}}\wedge\mathbf{e}_{j_{r+1}}\wedge\mathbf{e}_{j_{r+2}}\wedge\ldots\wedge\mathbf{e}_{j_{l-1}}\wedge u\right).
\end{align*}
\end{df}

Before going on, we make the following useful:

\begin{disc}\label{unos diagramas para calentar 2}
Given a free, finitely generated left $S$-module $M$ (whence $M$ is abstractly isomorphic to $S^{\oplus r}$ for some $r\in\N$), we denote by $\xymatrix@1{S[\Theta;\varphi]\otimes_S S^{\oplus r}\ar[r]^-{\lambda_M}& S[\Theta;\varphi]^{\oplus r}}$ the natural isomorphism of left $S[\Theta;\varphi]$-modules given by the assignment $s\otimes m\longmapsto sm$; the reader will easily note that, for each $0\leq l\leq n$, we have the following commutative diagram:
\[
\xymatrix{S[\Theta;\varphi]\otimes_S K_{l+1}\ar[rr]^-{\mathbbm{1}_{S[\Theta;\varphi]}\otimes d_{l+1}}\ar[d]_-{\lambda_{K_{l+1}}}& & S[\Theta;\varphi]\otimes_S K_l\ar[d]^-{\lambda_{K_l}}\\ \bigoplus_{1\leq i_1<\ldots<i_{l+1}\leq n}S[\Theta;\varphi](\mathbf{e}_{i_1}\wedge\ldots\wedge\mathbf{e}_{i_{l+1}})\ar[rr]^-{d'_{l+1}}& 
& \bigoplus_{1\leq i_1<\ldots<i_l\leq n}S[\Theta;\varphi](\mathbf{e}_{i_1}\wedge\ldots\wedge\mathbf{e}_{i_l}).}
\]
Here, $d'_{l+1}$ denotes the map $\partial_{l+1}$ restricted to the direct summand
\[
\bigoplus_{1\leq i_1<\ldots<i_{l+1}\leq n}S[\Theta;\varphi](\mathbf{e}_{i_1}\wedge\ldots\wedge\mathbf{e}_{i_{l+1}}).
\]
As we shall see quickly, this fact turns out to be very useful in what follows.
\end{disc}

The first thing we have to check out is that $\FK_{\bullet} (x_1,\ldots ,x_n)$ defines a chain complex in the category of left $S[\Theta;\varphi]$-modules. This fact follows from the next:

\begin{prop}\label{esto es un complejo tipo Koszul por la izquierda}
For any $0\leq l\leq n$, one has that $\partial_l\partial_{l+1}=0$.
\end{prop}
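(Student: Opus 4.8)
The plan is to use that every $\partial_l$ is a homomorphism of left $S[\Theta;\varphi]$-modules, so that $\partial_l\partial_{l+1}$ is one as well; it therefore suffices to check that it kills each of the two families of free generators of $\FK_{l+1}$, namely the \emph{exterior} generators $\mathbf{e}_{i_1}\wedge\ldots\wedge\mathbf{e}_{i_{l+1}}$ and the \emph{$u$-generators} $\mathbf{e}_{j_1}\wedge\ldots\wedge\mathbf{e}_{j_l}\wedge u$. Inspecting Definition \ref{el complejo de Frobenius-Koszul} one sees that $\partial$ sends the exterior generators into the $S[\Theta;\varphi]$-span of exterior generators, so these span a subcomplex; only the $u$-generators couple the two families. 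On the exterior generators I would simply invoke Discussion \ref{unos diagramas para calentar 2}: under the isomorphisms $\lambda_{K_\bullet}$ the restriction of each $\partial$ to the exterior part is $\mathbbm{1}_{S[\Theta;\varphi]}\otimes d$, so there $\partial_l\partial_{l+1}$ is identified with $\mathbbm{1}_{S[\Theta;\varphi]}\otimes(d_l d_{l+1})=0$, the vanishing being the classical relation $d_l d_{l+1}=0$ for the ordinary Koszul complex $K(x_1,\ldots,x_n)$. No non-commutativity enters, since the coefficients $x_{i_r}$ all lie in the central subring $S$.

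The real content is the $u$-generators. Writing $J=\{j_1<\cdots<j_l\}$, $E_J=\mathbf{e}_{j_1}\wedge\cdots\wedge\mathbf{e}_{j_l}$ and $s_J=s_{j_1}\cdots s_{j_l}$, the formula for $\partial_{l+1}$ yields one exterior term and a sum of $u$-terms:
\[
\partial_{l+1}(E_J\wedge u)=(-1)^l(\Theta-s_J)E_J+\sum_{r=1}^l(-1)^{r-1}\varphi(x_{j_r})\left(E_{J\setminus j_r}\wedge u\right).
\]
Applying $\partial_l$ and sorting the result again into its exterior and $u$-parts, I expect two separate cancellations. For the exterior part, $\partial_l\big((-1)^l(\Theta-s_J)E_J\big)$ must cancel against the leading terms of $\partial_l(E_{J\setminus j_r}\wedge u)$. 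The skew relation is essential here: moving $\Theta$ past the coefficient $x_{j_r}$ turns $(\Theta-s_J)x_{j_r}$ into $\varphi(x_{j_r})\Theta-s_Jx_{j_r}$, while the leading term of $\partial_l(E_{J\setminus j_r}\wedge u)$ contributes the coefficient $(-1)^{l-1}\varphi(x_{j_r})(\Theta-s_{J\setminus j_r})$ on $E_{J\setminus j_r}$. These two agree term-by-term exactly because of the identity
\[
\varphi(x_{j_r})\,s_{J\setminus j_r}=s_J\,x_{j_r},
\]
which follows from $\varphi(x_{j_r})=s_{j_r}x_{j_r}$ and $s_{j_r}s_{J\setminus j_r}=s_J$; as they carry opposite global signs $(-1)^l$ versus $(-1)^{l-1}$, they cancel.

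For the $u$-part, $\partial_l$ produces the classical Koszul double sum $\sum_{r}\sum_{s\neq r}(\pm)\,\varphi(x_{j_r})\varphi(x_{j_s})\,(E_{J\setminus\{j_r,j_s\}}\wedge u)$. I would pair the contributions coming from $(r,s)$ and from $(s,r)$: their coefficients coincide because $\varphi(x_{j_r})$ and $\varphi(x_{j_s})$ both lie in the central subring $S$ and hence commute, while the standard position-shift bookkeeping along $J\rightsquigarrow J\setminus j_r$ forces their signs to be opposite, so each pair cancels and the whole $u$-part vanishes.

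The main obstacle is exactly the interplay of these two cancellations with the non-commutativity: one must check that pushing $\Theta$ through the $x$'s reproduces precisely the factors $\varphi(x_{j_r})$ that appear in the $u$-part of the differential, which is what the identity $\varphi(x_{j_r})\,s_{J\setminus j_r}=s_J\,x_{j_r}$ encodes. Everything else --- tracking the signs under the omission of an index and pairing the mixed terms --- is the usual Koszul bookkeeping and is routine once the central role of that identity is isolated.
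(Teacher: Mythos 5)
Your proposal is correct and takes essentially the same route as the paper: generators of $\FK_{l+1}$ are split into the exterior ones, handled via Discussion \ref{unos diagramas para calentar 2} (so that $\partial_l\partial_{l+1}$ becomes $\mathbbm{1}_{S[\Theta;\varphi]}\otimes (d_l d_{l+1})=0$), and the $u$-generators, where the expansion cancels in pairs --- your identity $\varphi(x_{j_r})\,s_{J\setminus j_r}=s_J\,x_{j_r}$ is precisely what makes the paper's first and fourth summands cancel (the paper leaves it implicit), and the mixed $u$-terms cancel by the usual Koszul sign pairing, exactly as in the paper's second and third summands. One terminological slip worth fixing: $S$ is \emph{not} central in $S[\Theta;\varphi]$ (indeed $\Theta s=\varphi(s)\Theta$); your cancellations only use that the relevant coefficients all lie in the commutative subring $S$ and hence commute with one another, which is all that is needed.
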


\begin{proof}
Regarding the very definition of the $\partial$'s, we only have to distinguish two cases.

\vskip 2mm

$\cdot$ Given $1\leq i_1<\ldots<i_{l+1}\leq n$ one has, keeping in mind Discussion \ref{unos diagramas para calentar 2}, that
\begin{align*}
& \partial_l \left(\partial_{l+1}\left(\mathbf{e}_{i_1}\wedge\ldots\wedge\mathbf{e}_{i_{l+1}}\right)\right)=d'_l \left(d'_{l+1}\left(\mathbf{e}_{i_1}\wedge\ldots\wedge\mathbf{e}_{i_{l+1}}\right)\right)\\ & =\left(\lambda_{K_{l-1}}\circ\left(\mathbbm{1}_{S[\Theta;\varphi]}\otimes d_{l-1}\right)\circ\lambda_{K_l}^{-1}\right)\circ\left(\lambda_{K_l}\circ\left(\mathbbm{1}_{S[\Theta;\varphi]}\otimes d_l\right)\circ\lambda_{K_{l+1}}^{-1}\right)\left(\mathbf{e}_{i_1}\wedge\ldots\wedge\mathbf{e}_{i_{l+1}}\right)\\ & =\left(\lambda_{K_{l-1}}\circ\left(\mathbbm{1}_{S[\Theta;\varphi]}\otimes(d_l\circ d_{l+1})\right)\circ\lambda_{K_{l+1}}^{-1}\right)\left(\mathbf{e}_{i_1}\wedge\ldots\wedge\mathbf{e}_{i_{l+1}}\right)\\ & =\left(\lambda_{K_{l-1}}\circ\left(\mathbbm{1}_{S[\Theta;\varphi]}\otimes 0\right)\circ\lambda_{K_{l+1}}^{-1}\right)\left(\mathbf{e}_{i_1}\wedge\ldots\wedge\mathbf{e}_{i_{l+1}}\right)=0;
\end{align*}
indeed, notice that $d_l d_{l+1}=0$ because they are the usual chain differentials in the Koszul chain complex of $S$ with respect to $x_1,\ldots ,x_n$.

\vskip 2mm

$\cdot$ Given $1\leq j_1<\ldots<j_l\leq n$, one has that
\begin{align*}
& \partial_l \left(\partial_{l+1}\left(\mathbf{e}_{j_1}\wedge\ldots\wedge\mathbf{e}_{j_l}\wedge u\right)\right)=\partial_l \left((-1)^l \left(\Theta-(s_{j_1}\cdots s_{j_l})\right)\left(\mathbf{e}_{j_1}\wedge\ldots\wedge\mathbf{e}_{j_l}\right)\right. +\\ & \left.\sum_{r=1}^l (-1)^{r-1}\varphi(x_{j_r})\left(\mathbf{e}_{j_1}\wedge\ldots\wedge\mathbf{e}_{j_{r-1}}\wedge\mathbf{e}_{j_{r+1}}\wedge\mathbf{e}_{j_{r+2}}\wedge\ldots\wedge\mathbf{e}_{j_l}\wedge u\right)\right)=\\ & \sum_{r=1}^l (-1)^{r+l-1}\left(\varphi(x_{j_r})\Theta-(s_{j_1}\cdots s_{j_l})x_{j_r}\right)\left(\mathbf{e}_{j_1}\wedge\ldots\wedge\mathbf{e}_{j_{r-1}}\wedge\mathbf{e}_{j_{r+1}}\wedge\mathbf{e}_{j_{r+2}}\wedge\ldots\wedge\mathbf{e}_{j_l}\right)+\\ 
& \sum_{r=1}^l\sum_{k=1}^{r-1}
(-1)^{r+k-2}\varphi(x_{j_k}x_{j_r})\left(\mathbf{e}_{j_1}\wedge\ldots\wedge\mathbf{e}_{j_{k-1}}\wedge\mathbf{e}_{j_{k+1}}\wedge\mathbf{e}_{j_{k+2}}\wedge\ldots\wedge\mathbf{e}_{j_{r-1}}\wedge\mathbf{e}_{j_{r+1}}\wedge\mathbf{e}_{j_{r+2}}\wedge\ldots
\wedge\mathbf{e}_{j_l}\wedge u\right)+\\ & \sum_{r=1}^l\sum_{k=r}^l (-1)^{r+k-2}\varphi(x_{j_k}x_{j_r})\left(\mathbf{e}_{j_1}\wedge\ldots\wedge\mathbf{e}_{j_{r-1}}\wedge\mathbf{e}_{j_{r+1}}\wedge\mathbf{e}_{j_{r+2}}\wedge\ldots\wedge\mathbf{e}_{j_{k-1}}\wedge\mathbf{e}_{j_{k+1}}\wedge\mathbf{e}_{j_{k+2}}\wedge\ldots\wedge\mathbf{e}_{j_l}\wedge u\right)+\\ & \sum_{r=1}^l (-1)^{l+r-2}\left(\varphi(x_{j_r})\Theta-(s_{j_1}\cdots s_{j_{r-1}}s_{j_{r+1}}s_{j_{r+2}}\cdots s_{j_l})\varphi(x_{j_r})\right)\left(\mathbf{e}_{j_1}\wedge\ldots\wedge\mathbf{e}_{j_{r-1}}\wedge\mathbf{e}_{j_{r+1}}\wedge\mathbf{e}_{j_{r+2}}\wedge\ldots\wedge\mathbf{e}_{j_l}\right).
\end{align*}

\vskip 2mm
Starting from the top, the first summand (respectively, the second summand) cancels out the fourth summand (respectively, the third summand) and therefore the whole expression vanishes, just what we finally wanted to check.
\end{proof}

\begin{rk} \label{matrices}
The differentials of the $\varphi$-Koszul complex
\[
\FK_{\bullet}(x_1,\dots,x_n):=\xymatrix{0\ar[r]& \FK_{n+1}\ar[r]^-{\partial_{n+1}}& \FK_n\ar[r]^-{\partial_n}& \ldots\ar[r]^-{\partial_1}& \FK_0\ar[r]&  0.}
\]
are described as follows:

\begin{enumerate}

\item $\partial_{n+1}$ is represented by right multiplication by matrix $\begin{pmatrix} (-1)^n\left(\Theta-(s_1\cdots s_n)\right)& M_n^{[\varphi]}\end{pmatrix}$.

\item For each $1\leq l\leq n-1$, $\partial_{l+1}$ is represented by right multiplication by matrix
\[
\left(\begin{array}{c|c}
M_{l+1}& \mathbf{0}\\ \hline
(-1)^l D_l& M_l^{[\varphi]}\end{array}\right),
\]
where $D_l$ is a diagonal matrix with non-zero entries $\Theta-(s_{i_1}\cdots s_{i_l})$, $1\leq i_1<\ldots <i_l\leq n$. 

\item $\partial_1$ is represented by right multiplication by matrix $\begin{pmatrix} x_1 & \ldots & x_n & \Theta -1\end{pmatrix}^T$.

\end{enumerate}

\vskip 2mm

For example, when $n=2$, $\FK_{\bullet} (x_1,x_2)$ boils down to the chain complex
\[
\xymatrix{0\ar[r]& S[\Theta ;\varphi]\ar[r]^-{\partial_3}& S[\Theta ;\varphi]^{\oplus 3}\ar[r]^-{\partial_2}& S[\Theta ;\varphi]^{\oplus 3}\ar[r]^-{\partial_1}& S[\Theta;\varphi]\ar[r]& 0,}
\]
where $\partial_3$, $\partial_2$ and $\partial_1$ are given by right multiplication by matrices 
\[
\begin{pmatrix} \Theta-(s_1s_2) & -\varphi(x_2)& \varphi(x_1)\end{pmatrix} \hskip 5mm
\left(\begin{array}{cc|c}
-x_2& x_1 & 0\\ \hline
 s_1-\Theta & 0& \varphi(x_1)\\ 0& s_2-\Theta & \varphi(x_2) \end{array}\right) \hskip 5mm
\begin{pmatrix} x_1 \\ x_2 \\ \Theta -1 \end{pmatrix}
\]

\end{rk}

\vskip 2mm

Now, we want to single out the following technical fact because it will play a key role during the proof of the first main result of this paper (see Theorem \ref{la exactitud del Cartier Koszul}).

\begin{lm}\label{una cierta commutacion de matrices dos}
Preserving the notations introduced in Remark \ref{matrices}, one has, for any $0\leq l\leq n$, that $M_l^{[\varphi]} D_{l-1}=D_lM_l$.
\end{lm}

\begin{proof}
Fix $0\leq l\leq n$. Proposition \ref{esto es un complejo tipo Koszul por la izquierda} implies that $\partial_l \partial_{l+1}=0$. Then, according to Remark  \ref{matrices}, this equality corresponds to
\[
\left(\begin{array}{c|c}
M_{l+1}& \mathbf{0}\\ \hline
(-1)^l D_l& M_l^{[\varphi]}\end{array}\right)
\left(\begin{array}{c|c}
M_l& \mathbf{0}\\ \hline
(-1)^{l-1} D_{l-1}& M_{l-1}^{[\varphi]}\end{array}\right)=\mathbf{0}.
\]
In particular, we must have $(-1)^l D_lM_l+(-1)^{l-1}M_l^{[\varphi]}D_{l-1}=\mathbf{0}$, which is equivalent to say that
\[
(-1)^l\left(D_lM_l -M_l^{[\varphi]}D_{l-1}\right)=\mathbf{0}.
\]
Whence $M_l^{[\varphi]} D_{l-1}=D_lM_l$, just what we finally wanted to show.
\end{proof}

Before showing our main result, we want to establish a certain technical fact, which is interesting in its own right; namely:

\begin{prop}\label{un modulo plano en medio}
$S[\Theta;\varphi]$ is a flat right $S$-module.
\end{prop}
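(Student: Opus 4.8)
The plan is to show that $S[\Theta;\varphi]$ is flat as a right $S$-module by exhibiting it as a direct limit (or, more simply, as a countable direct sum) of finitely generated free right $S$-modules, and invoking that flatness is preserved under such limits. First I would recall the right $S$-module structure on $B:=S[\Theta;\varphi]$ explicitly. As a left $S$-module $B$ is free with basis $\{\Theta^e\}_{e\geq 0}$, but we need the \emph{right} structure. Using the commutation relation $\Theta a=\varphi(a)\Theta$, one checks by induction that $\Theta^e a=\varphi^e(a)\Theta^e$ for every $a\in S$ and every $e\geq 0$. Hence for a monomial $s\Theta^e$ (with $s\in S$) the right action is $(s\Theta^e)\cdot a=s\Theta^e a=s\varphi^e(a)\Theta^e$, so the right submodule $S\Theta^e$ is carried into itself by the right $S$-action.

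The crux of the argument is to understand each graded piece $S\Theta^e$ as a right $S$-module. I would identify $S\Theta^e$ with the pushforward $\varphi^e_*S$: the map sending $s\Theta^e\mapsto \varphi^e_* s$ is an isomorphism of right $S$-modules, since under this identification the right action $s\Theta^e\cdot a=s\varphi^e(a)\Theta^e$ matches the ordinary (untwisted) right $S$-action on $\varphi^e_*S$. Now the hypotheses of this section guarantee exactly what we need: $\varphi$ is assumed to be a \emph{flat} map of $\K$-algebras, so $\varphi^e$ is flat as well, and therefore $\varphi^e_*S$ is a flat right $S$-module for each $e\geq 0$. Thus every homogeneous piece $S\Theta^e$ is flat over $S$ on the right.

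Finally, since $B=\bigoplus_{e\geq 0}S\Theta^e$ is a direct sum of right $S$-modules, and this decomposition respects the right $S$-action (each summand being a right $S$-submodule by the computation above), I would conclude by the standard fact that an arbitrary direct sum of flat modules is flat. Equivalently, one may note that flatness can be tested by showing that $-\otimes_S B$ is exact, and tensoring with a direct sum commutes with the direct sum, reducing exactness of $-\otimes_S B$ to the exactness of each $-\otimes_S S\Theta^e$, which holds by the flatness of the summands just established.

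The main obstacle, such as it is, lies less in any deep mechanism than in being careful about \emph{sidedness}: one must resist conflating the evident left-module freeness of $B$ with the right-module structure, and verify that the right action genuinely preserves each $S\Theta^e$ via the relation $\Theta^e a=\varphi^e(a)\Theta^e$. Once the identification $S\Theta^e\cong\varphi^e_*S$ of right $S$-modules is in place, the flatness of $\varphi$ (a standing assumption of the section) does all the work, and the passage to the direct sum is routine.
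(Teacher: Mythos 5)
Your proof is correct and takes essentially the same route as the paper's: you decompose $S[\Theta;\varphi]=\bigoplus_{e\geq 0}S\Theta^e$, identify each summand $S\Theta^e$ with $\varphi_*^e S$ as a right $S$-module, deduce flatness of each graded piece from the flatness of $\varphi$ (hence of $\varphi^e$), and conclude via the standard fact that a direct sum of right $S$-modules is flat if and only if every summand is. The only difference is cosmetic: you spell out the computation $\Theta^e a=\varphi^e(a)\Theta^e$ showing each $S\Theta^e$ is a right $S$-submodule, which the paper leaves implicit.
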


\begin{proof}
By the very definition of left skew polynomial rings,
\[
S[\Theta ;\varphi]=\bigoplus_{e\geq 0}S\Theta^e.
\]
Since a direct sum of right $S$--modules is flat if and only if it
is so all its direct summands \cite[Proposition 3.46 (ii)]{Rotman2009},
it is enough to check that, for any $e\geq 0$, $S\Theta^e$ is a flat right $S$-module.

\vskip 2mm

Fix $e\geq 0$. Firstly, albeit the notation $S\Theta^e$ might suggest that it is just a left $S$-module, this is not the case because $S\Theta^e$ can be identified with $\Theta^e \varphi_*^e S$; from this point of view, it is clear that $S\Theta^e$ may be also regarded as a right $S$-module. Therefore, keeping in mind the previous identification one has that the map $\xymatrix@1{\Theta^e \varphi_*^e S\ar[r]& \varphi_*^e S}$ given by the assignment $\Theta^e \varphi_*^e s\longmapsto\varphi_*^e s$ defines an abstract isomorphism of right $S$-modules, whence $S\Theta^e$ is (abstractly) isomorphic to $\varphi_*^e S$ in the category of right $S$-modules and then the result follows from the fact that $\varphi_*^e S$ is a flat right $S$-module because of the flatness
of $\varphi$; the proof is therefore completed.
\end{proof}

\begin{rk}\label{right flatness of Frobenius}
When $S$ is a commutative Noetherian regular local ring of prime characteristic
$p$ and $F$ is the Frobenius map on $S,$ the fact that $S[\Theta;F]$
is a flat right $S$--module was already observed by Y.\,Yoshino
\cite[Proof of Example (9.2)]{Yoshino1994}.
\end{rk}

Next result provides some useful properties of $\FK_{\bullet} (x_1,\ldots ,x_n)$.

\begin{prop}\label{unos cuantos hechos del Frobenius-Koszul}
$H_0 \left(\FK_{\bullet} (x_1,\ldots ,x_n)\right)\cong S/I_n$ as left $S[\Theta;\varphi]$-modules, where $I_n=\langle x_1,\ldots ,x_n \rangle;$ moreover, $H_{n+1} \left(\FK_{\bullet} (x_1,\ldots ,x_n)\right)=0$.
\end{prop}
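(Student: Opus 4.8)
The plan is to compute the two homology groups directly: both sit at the extreme ends of $\FK_{\bullet}(x_1,\ldots,x_n)$, so they only involve the outermost differentials $\partial_1$ and $\partial_{n+1}$, whose matrices are recorded in Remark \ref{matrices}. For $H_0$ I would identify the image of $\partial_1$ with an explicit left ideal and recognize the resulting cyclic quotient; for $H_{n+1}$ I would show that $\partial_{n+1}$ is injective by an elementary leading-term argument.

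For $H_0$: since $\FK_0 = S[\Theta;\varphi]$ and $\partial_1$ is right multiplication by $(x_1,\ldots,x_n,\Theta-1)^T$, its image is the left ideal $L := \sum_{i=1}^n S[\Theta;\varphi]x_i + S[\Theta;\varphi](\Theta-1)$, so that $H_0 = S[\Theta;\varphi]/L$. To identify this quotient, I would first endow $S$ with its natural left $S[\Theta;\varphi]$-module structure in which $\Theta$ acts by $\varphi$ (this is $\varphi$-linear, since $\varphi(as)=\varphi(a)\varphi(s)$), the general analog of Proposition \ref{localizacion con Frobenius}(i). The evaluation map $\pi\colon S[\Theta;\varphi]\to S$, $P\mapsto P\cdot 1$, is then a surjective homomorphism of left $S[\Theta;\varphi]$-modules whose kernel is $S[\Theta;\varphi](\Theta-1)$, the key point being the telescoping congruence $\Theta^{e}\equiv\Theta^{e-1}\pmod{S[\Theta;\varphi](\Theta-1)}$. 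It then remains to compute $\pi\left(\sum_i S[\Theta;\varphi]x_i\right)$: using the skew relation $\Theta^{e}x_i = \varphi^{e}(x_i)\Theta^{e}$, one finds $\pi(Px_i)=\sum_e a_e\varphi^e(x_i)$ for $P=\sum_e a_e\Theta^e$, so this image is the ideal of $S$ generated by $\{\varphi^e(x_i): e\geq 0,\ 1\le i\le n\}$. The standing hypothesis $\varphi(x_i)\in\langle x_i\rangle$ forces $\varphi^e(x_i)\in\langle x_i\rangle$ for every $e$, while the term $e=0$ already contributes $x_i$; hence this ideal collapses to $\langle x_1,\ldots,x_n\rangle = I_n$. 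Passing to the quotient yields $H_0 = S[\Theta;\varphi]/L \cong S/I_n$ as left $S[\Theta;\varphi]$-modules.

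For $H_{n+1}$: here $\FK_{n+1}=S[\Theta;\varphi]\,(\mathbf{e}_1\wedge\cdots\wedge\mathbf{e}_n\wedge u)$ is free of rank one, and by Remark \ref{matrices} the component of $\partial_{n+1}(\mathbf{e}_1\wedge\cdots\wedge\mathbf{e}_n\wedge u)$ along $\mathbf{e}_1\wedge\cdots\wedge\mathbf{e}_n$ equals $(-1)^n\left(\Theta-(s_1\cdots s_n)\right)$. Thus, for $P\in S[\Theta;\varphi]$, the vanishing of $\partial_{n+1}\left(P(\mathbf{e}_1\wedge\cdots\wedge\mathbf{e}_n\wedge u)\right)$ forces in particular $P\left(\Theta-(s_1\cdots s_n)\right)=0$. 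Writing $P=\sum_{e=0}^{d}a_e\Theta^{e}$ with $a_d\neq 0$, the skew relation gives $P\left(\Theta-(s_1\cdots s_n)\right) = a_d\Theta^{d+1} + (\text{terms of }\Theta\text{-degree}\le d)$, so its degree-$(d+1)$ coefficient is $a_d\neq 0$; hence $P=0$. Therefore $\partial_{n+1}$ is injective and $H_{n+1}\left(\FK_{\bullet}(x_1,\ldots,x_n)\right)=0$.

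I expect the main obstacle to be the $H_0$ computation, specifically the passage from the left ideal $L$ in the noncommutative ring $S[\Theta;\varphi]$ to the commutative quotient $S/I_n$. The delicate points are handling the skew-commutation $\Theta a=\varphi(a)\Theta$ correctly when transporting $\sum_i S[\Theta;\varphi]x_i$ through $\pi$, and observing that the a priori larger ideal generated by all iterates $\varphi^e(x_i)$ collapses back to $I_n$ precisely because of the hypothesis $\varphi(x_i)\in\langle x_i\rangle$. By contrast the $H_{n+1}$ part is routine once one notices that the leading-term argument requires no integral-domain hypothesis on $S$, since the top $\Theta$-degree term is isolated.
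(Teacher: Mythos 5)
Your proof is correct, and for the second assertion it takes a genuinely different route from the paper's. For $H_0$ your argument coincides in substance with the paper's: both reduce to the isomorphism $S[\Theta;\varphi]/S[\Theta;\varphi](\Theta-1)\cong S$ and the identification $\im(\partial_1)=S[\Theta;\varphi]I_n+S[\Theta;\varphi](\Theta-1)$; the paper states the conclusion in one line, whereas you make explicit the point it leaves silent, namely that under the evaluation map the left ideal $\sum_i S[\Theta;\varphi]x_i$ maps onto the ideal generated by all iterates $\varphi^e(x_i)$, which collapses to $I_n$ precisely because of the standing hypothesis $\varphi(x_i)\in\langle x_i\rangle$ --- a worthwhile clarification. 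For $H_{n+1}$ the two proofs diverge: the paper composes $\partial_{n+1}$ with the projection onto the summand $\bigoplus_j S[\Theta;\varphi](\mathbf{e}_{j_1}\wedge\ldots\wedge\mathbf{e}_{j_{n-1}}\wedge u)\cong S[\Theta;\varphi]^{\oplus n}$, identifies $\pi\partial_{n+1}$ with $\mathbbm{1}_{S[\Theta;\varphi]}\otimes d_n^{[\varphi]}$ via Discussion \ref{unos diagramas para calentar 2}, and invokes the flatness of $S[\Theta;\varphi]$ as a right $S$-module (Proposition \ref{un modulo plano en medio}) together with the injectivity of $d_n^{[\varphi]}$; you instead project onto the complementary rank-one coordinate $\mathbf{e}_1\wedge\ldots\wedge\mathbf{e}_n$, where vanishing forces $P\left(\Theta-(s_1\cdots s_n)\right)=0$, and kill $P$ by inspecting the coefficient of $\Theta^{d+1}$ in the free left $S$-module decomposition $S[\Theta;\varphi]=\bigoplus_e S\Theta^e$. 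Your argument is more elementary and more robust: it uses neither the flatness of $\varphi$ nor any nonzerodivisor property of the $s_i$ or $\varphi(x_i)$ (the leading coefficient $a_d$ is never multiplied by anything), whereas the paper's route needs $d_n^{[\varphi]}$ injective, which is where flatness enters. What the paper's choice buys is economy within the larger scheme: the flat base-change mechanism $\mathbbm{1}\otimes(-)$ and the acyclicity of $S[\Theta;\varphi]\otimes_S K_{\bullet}^{[\varphi]}$ are exactly the tools redeployed in the proof of Theorem \ref{la exactitud del Cartier Koszul}, so the paper's proof of the proposition is a dry run for the main theorem, while yours is self-contained but does not feed into that machinery.
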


\begin{proof}
First we notice that, as in the case of the Frobenius morphism studied by Y.\,Yoshino (see Proposition \ref{localizacion con Frobenius}), we have $\frac{S[\Theta ;\varphi]}{S[\Theta ;\varphi](\Theta -1)}\cong S$. Then, using Remark \ref{matrices} it follows that
\[
H_0 \left(\FK_{\bullet} (x_1,\ldots ,x_n)\right)=\frac{S[\Theta;\varphi]}{\im (\partial_1)}\cong\frac{S[\Theta ;\varphi]}{S[\Theta ;\varphi]I_n+S[\Theta ;\varphi](\Theta -1)}\cong S/I_n.
\]

Now, consider the composition
\[
\xymatrix{S[\Theta;\varphi]\ar[r]^-{\partial_{n+1}}& S[\Theta;\varphi] \oplus S[\Theta;\varphi]^{\oplus n}\ar[r]^-{\pi}& S[\Theta;\varphi]^{\oplus n},}
\]
where $\pi$ denotes the corresponding projection. 
In this way, we have that $\pi\partial_{n+1}$ turns out to be, up to isomorphisms, 
$\mathbbm{1}_{S[\Theta;\varphi]}\otimes d_n^{[\varphi]}$ 
(indeed, this fact follows directly from the commutative square established in 
Discussion \ref{unos diagramas para calentar 2}); regardless, since $d_n^{[\varphi]}$ is an injective 
homomorphism between free left $S$-modules, and $S[\Theta;\varphi]$ is a flat right $S$-module 
(cf.\,Proposition \ref{un modulo plano en medio}), one has that $\mathbbm{1}_{S[\Theta;\varphi]}\otimes d_n^{[\varphi]}$ 
is an injective homomorphism between free left $S[\Theta;\varphi]$-modules. Therefore, $\pi\partial_{n+1}$ is also an 
injective homomorphism, whence $\partial_{n+1}$ is so. This fact concludes the proof.
\end{proof}

Now, we state and prove the first main result of this paper, which is the following:

\begin{teo}\label{la exactitud del Cartier Koszul}
The $\varphi$-Koszul complex $\FK_{\bullet} (x_1,\ldots ,x_n)$ provides a free resolution of $S/I_n$ in the category of left $S[\Theta;\varphi]$-modules.
\end{teo}

\begin{proof}
By Proposition \ref{unos cuantos hechos del Frobenius-Koszul}, it is enough to check, for any $1\leq l\leq n$, that $H_l \left(\FK_{\bullet}(x_1,\ldots ,x_n)\right)=0$.

So, fix $1\leq l\leq n$. Our goal is to show that $\ker (\partial_l)\subseteq\im (\partial_{l+1})$; in other words, we have to prove that the chain complex $\xymatrix@1{\FK_{l+1}\ar[r]^-{\partial_{l+1}}& \FK_l\ar[r]^-{\partial_l}& \FK_{l-1}}$ is midterm exact. First of all, remember that $\FK_l=K'_l \oplus K''_l$, where
\begin{align*}
& K'_l:=\bigoplus_{1\leq i_1<\ldots<i_l\leq n}S[\Theta ;\varphi](\mathbf{e}_{i_1}\wedge\ldots\wedge\mathbf{e}_{i_l}),\text{ and}\\
& K''_l:=\bigoplus_{1\leq j_1<\ldots<j_{l-1}\leq n}S[\Theta ;\varphi](\mathbf{e}_{j_1}\wedge\ldots\wedge\mathbf{e}_{j_{l-1}}\wedge u).
\end{align*}
Furthermore, Discussion \ref{unos diagramas para calentar 2} implies that the chain complexes
\[
K'_{\bullet}:\ \xymatrix{0\ar[r]& K'_n\ar[r]^-{d'_n}& K'_{n-1}\ar[r]& \ldots \ar[r]& K'_2\ar[r]^-{d'_2}\ar[r]& K'_1\ar[r]^-{d'_1}& K'_0\ar[r]& 0}
\]
and
\[
(K'_{\bullet})^{[\varphi]}:\ \xymatrix{0\ar[r]& K'_n\ar[r]^-{(d'_n)^{[\varphi]}}& K'_{n-1}\ar[r]& \ldots \ar[r]& K'_2\ar[r]^-{(d'_2)^{[\varphi]}}\ar[r]& K'_1\ar[r]^-{(d'_1)^{[\varphi]}}& K'_0\ar[r]& 0}
\]
are respectively canonically isomorphic to $S[\Theta;\varphi]\otimes_S K_{\bullet}$ and $S[\Theta;\varphi]\otimes_S K_{\bullet}^{[\varphi]}$; in particular, since $S[\Theta;\varphi]$ is a flat right $S$-module (cf.\,Proposition \ref{un modulo plano en medio}), $K'_{\bullet}$ and $(K'_{\bullet})^{[\varphi]}$ are both acyclic chain complexes in the category of left $S[\Theta;\varphi]$-modules. On the other hand, we also have to keep in mind that $d'_l$ and $(d'_l)^{[\varphi]}$ are respectively represented by right multiplication by matrix $M_l$ and $M_l^{[\varphi]}$ (cf.\,Proposition \ref{unos cuantos hechos del Frobenius-Koszul} and its corresponding notation).

\vskip 2mm

Now, let $P\in\ker (\partial_l)\subseteq\FK_l$. Since $\FK_l=K'_l\oplus K''_l$, we may write $P=(P',P'')$ for certain $P'\in K'_l$ and $P''\in K''_l$; in this way, as $P\in\ker (\partial_l)$ it follows that
\[
\begin{pmatrix} \mathbf{0}& \mathbf{0}\end{pmatrix}=\begin{pmatrix} P'&  P''\end{pmatrix}\left(\begin{array}{c|c}
M_l& \mathbf{0}\\ \hline
(-1)^{l-1}D_{l-1}& M_{l-1}^{[\varphi]}\end{array}\right)=\begin{pmatrix} P'M_l +P''(-1)^{l-1}D_{l-1}& P''M_{l-1}^{[\varphi]}\end{pmatrix},
\]
which leads to the following system of equations:
\[
P'M_l +P''(-1)^{l-1}D_{l-1}=\mathbf{0},\ P''M_{l-1}^{[\varphi]}=\mathbf{0}.
\]
In particular, since $P''M_{l-1}^{[\varphi]}=\mathbf{0}$ one has that $P''\in\ker ((d'_{l-1})^{[\varphi]})=\im ((d'_l)^{[\varphi]})$; therefore, there is $Q''\in K''_l$ such that $Q''M_l^{[\varphi]} =P''$. Using this fact, it follows that
\[
P'M_l +Q''(-1)^{l-1}M_l^{[\varphi]}D_{l-1}=0.
\]
Regardless, Lemma \ref{una cierta commutacion de matrices dos} tells us that $M_l^{[\varphi]}D_{l-1}=D_l M_l$, whence
\[
P'M_l +Q''(-1)^{l-1}D_l M_l=0,
\]
which is equivalent to say that $\left(P'+Q''(-1)^{l-1}D_l\right)M_l=0$. In this way, one has that
\[
P'+Q''(-1)^{l-1}D_l\in\ker (d'_l)=\im (d'_{l+1})
\]
and therefore there exists $Q'\in K'_{l+1}$ such that $Q'M_{l+1}=P'+Q''(-1)^{l-1}D_l$.

Summing up, setting $Q:=(Q',Q'')\in\FK_{l+1}$, it follows that
\begin{align*}
\begin{pmatrix} Q'& Q''\end{pmatrix}
\left(\begin{array}{c|c}
M_{l+1}& \mathbf{0}\\ \hline
(-1)^l D_l& M_l^{[\varphi]}\end{array}\right)& =\begin{pmatrix} Q'M_{l+1}+Q''(-1)^l D_l &  Q''M_l^{[\varphi]}\end{pmatrix}\\ &=\begin{pmatrix} P'+Q''(-1)^{l-1}D_l +Q''(-1)^l D_l & P''\end{pmatrix}=\begin{pmatrix} P'& P''\end{pmatrix}
\end{align*}
and therefore we can conclude that $P\in\im (\partial_{l+1})$, which is exactly what we wanted to show.
\end{proof}
The reader will easily note that, during the proof, we have obtained the below result; we specially
thank Rishi Vyas to single out to us this fact.

\begin{prop}
There is a short exact sequence of chain complexes
\[
0\rightarrow S[\Theta;\varphi]\otimes_S K_{\bullet} (S;x_1,\ldots ,x_n)\rightarrow \FK_{\bullet} (x_1,\ldots ,x_n)\rightarrow 
\left(S[\Theta;\varphi]\otimes_S K_{\bullet} (S;\varphi(x_1),\ldots ,\varphi(x_n))\right)[1]\rightarrow 0
\]
in the category of left $S[\Theta;\varphi]$--modules.
\end{prop}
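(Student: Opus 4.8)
The plan is to extract the short exact sequence directly from the degreewise splitting $\FK_l = K'_l \oplus K''_l$ used in the proof of Theorem \ref{la exactitud del Cartier Koszul}, together with the block form of the differentials recorded in Remark \ref{matrices}. Recall that
\[
K'_l := \bigoplus_{1\leq i_1<\ldots<i_l\leq n}S[\Theta;\varphi](\mathbf{e}_{i_1}\wedge\ldots\wedge\mathbf{e}_{i_l}),\qquad K''_l := \bigoplus_{1\leq j_1<\ldots<j_{l-1}\leq n}S[\Theta;\varphi](\mathbf{e}_{j_1}\wedge\ldots\wedge\mathbf{e}_{j_{l-1}}\wedge u).
\]

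First I would verify that $K'_\bullet$ is a subcomplex of $\FK_\bullet$. By Remark \ref{matrices}, $\partial_{l+1}$ is right multiplication by a block matrix whose $K'_{l+1}\to K''_l$ entry vanishes, so $\partial_{l+1}(K'_{l+1})\subseteq K'_l$ and the restriction $\partial_{l+1}|_{K'_{l+1}}$ is right multiplication by $M_{l+1}$. By Discussion \ref{unos diagramas para calentar 2}, the isomorphisms $\lambda_{K_l}$ assemble into an isomorphism of complexes of left $S[\Theta;\varphi]$-modules between $K'_\bullet$ and $S[\Theta;\varphi]\otimes_S K_\bullet(S;x_1,\ldots,x_n)$. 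This furnishes the leftmost map of the sequence, which in each homological degree is the split inclusion of a direct summand.

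Next I would identify the cokernel. Since $\partial$ preserves $K'_\bullet$, it descends to $\FK_\bullet/K'_\bullet$, whose term in homological degree $l$ is $K''_l$ and whose induced differential is the surviving block of Remark \ref{matrices}, namely right multiplication by $M_l^{[\varphi]}$ (the off-diagonal block $(-1)^l D_l$ maps into $K'_l$ and hence dies in the quotient). By the definition of $(-)^{[\varphi]}$, the matrix $M_l^{[\varphi]}$ is the matrix of the $l$-th Koszul differential of $\varphi(x_1),\ldots,\varphi(x_n)$; equivalently, after the index shift $K''_l \cong K'_{l-1}$ the complex $K''_\bullet$ is precisely the twisted complex $(K'_\bullet)^{[\varphi]}$ appearing in the proof of Theorem \ref{la exactitud del Cartier Koszul}, which was shown there to be canonically isomorphic to $S[\Theta;\varphi]\otimes_S K_\bullet(S;\varphi(x_1),\ldots,\varphi(x_n))$. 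Thus $\FK_\bullet/K'_\bullet \cong \bigl(S[\Theta;\varphi]\otimes_S K_\bullet(S;\varphi(x_1),\ldots,\varphi(x_n))\bigr)[1]$, which gives the rightmost map.

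All three arrows are maps of left $S[\Theta;\varphi]$-modules by construction, and in each degree the sequence is the split short exact sequence $0\to K'_l\to\FK_l\to K''_l\to 0$, so the sequence of complexes is exact. The only point that needs care is the bookkeeping of the shift: one must fix the sign convention for $[1]$ and check that the induced differential (right multiplication by $M_l^{[\varphi]}$) agrees with the shifted differential of $S[\Theta;\varphi]\otimes_S K_\bullet(S;\varphi(x_1),\ldots,\varphi(x_n))$; should the convention introduce a factor $(-1)$ on shifted differentials, it is absorbed by rescaling the degree-$l$ component of the identification of $K''_\bullet$ by $(-1)^l$. This degree-and-sign matching is the only mildly delicate step, the rest being a direct transcription of the block structure in Remark \ref{matrices}.
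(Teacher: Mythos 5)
Your proposal is correct and coincides with the paper's own (implicit) argument: the paper obtains this proposition as a by-product of the proof of Theorem \ref{la exactitud del Cartier Koszul}, using exactly your decomposition $\FK_l=K'_l\oplus K''_l$, the block-triangular form of the differentials from Remark \ref{matrices} (zero block forcing $K'_{\bullet}$ to be a subcomplex, the $D_l$ block dying in the quotient), and the identifications of $K'_{\bullet}$ and $(K'_{\bullet})^{[\varphi]}$ with $S[\Theta;\varphi]\otimes_S K_{\bullet}(S;x_1,\ldots,x_n)$ and $S[\Theta;\varphi]\otimes_S K_{\bullet}(S;\varphi(x_1),\ldots,\varphi(x_n))$ from Discussion \ref{unos diagramas para calentar 2}. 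Your explicit handling of the sign convention on the shift $[1]$, absorbed by rescaling the degree-$l$ identification by $(-1)^l$, is a sound refinement of a point the paper leaves tacit.
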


\subsection{The $\varphi$-Koszul chain complex in full generality}

Our next aim is to define the $\varphi$-Koszul chain complex over a more general setting, 
and explore some specific situations on which we can ensure that defines 
a finite free resolution.
Let $A$ be a commutative Noetherian ring containing a commutative subring $\K,$ and $y_1,\ldots ,y_n$ denote 
arbitrary elements of $A;$ in
addition, assume that we fix a flat endomorphism of $\K$--algebras
$\xymatrix@1{S:=\K[x_1,\ldots ,x_n]\ar[r]^-{\varphi}& S}$ satisfying
$\varphi (x_i)\in\langle x_i\rangle$ for any $1\leq i\leq n.$ In this
section, we regard $A$ as an $S$-algebra under $\xymatrix@1{S\ar[r]
^-{\psi}& A,}$ where $\xymatrix@1{S\ar[r]
^-{\psi}& A}$ is the natural homomorphism of $\K$-algebras which sends each $x_i$ to $y_i.$ Finally, we
suppose that there exists a $\K$--algebra homomorphism
$\xymatrix@1{A\ar[r]^-{\Phi}& A}$ making the square
\[
\xymatrix{S\ar[d]_-{\varphi}\ar[r]^-{\psi}& A\ar[d]^-{\Phi}\\
S\ar[r]_-{\psi}& A}
\]
commutative.

\begin{df}
We define the \emph{$\varphi$-Koszul chain complex} of $A$ with respect to $y_1,\ldots ,y_n$ as the chain complex $\FK_{\bullet} (y_1,\ldots ,y_n;A):=A\otimes_S \FK_{\bullet} (x_1,\ldots ,x_n).$
\end{df}

Under slightly different assumptions, we want to show that $\FK_{\bullet} (y_1,\ldots ,y_n;A)$ still
defines a finite free resolution; with this purpose in mind, first
of all we review the following notion, which was introduced independently in
\cite[Definition 2 of page 157]{Bourbakihomological} (using a different terminology) and by T.\,Kabele in \cite[Definition 2]{Kabele1971}.

\begin{df}\label{Koszul regular sequence}
Let $R$ be a commutative ring, let $s\in\N$, and let $f_1,\ldots ,f_n$ be a sequence of elements in $R$. It is said that $f_1,\ldots ,f_n$ is a \emph{Koszul regular sequence} provided the Koszul chain complex $K_{\bullet} (f_1,\ldots ,f_n;R)$ provides a free resolution of $R/I_n$, where $I_n=\langle f_1,\dots, f_n \rangle$.
\end{df}



Next statement may be regarded as a generalization of Theorem \ref{la exactitud del Cartier Koszul}; this is the main
result of this section.

\begin{teo} \label{resolution}
Let $A$ be a commutative Noetherian ring containing a commutative subring $\K,$ and let $y_1,\ldots ,y_n$ be a sequence of elements in $A.$ Moreover, we assume that $\Phi$ is flat and $y_1,\ldots ,y_n$ is an $A$-Koszul regular sequence. Then, $\FK_{\bullet} (y_1,\ldots ,y_n;A)$ defines a finite free resolution of $A/I_n$ in the category of left $A[\Theta ;\Phi]$-modules, where $I_n =\langle y_1,\ldots ,y_n \rangle.$
\end{teo}

\begin{proof}
The proof of this result is, mutatis mutandi, the same as the one of Theorem \ref{la exactitud del Cartier Koszul} replacing $S$ by $A$ and $x_1,\ldots ,x_n$ by $y_1,\ldots ,y_n$. Indeed, a simple inspection of the proof of Theorem \ref{la exactitud del Cartier Koszul} reveals that we only used there the flatness
of $\varphi$ and the fact that the Koszul chain complex $K_{\bullet} (x_1,\ldots ,x_n)$ defines a finite free resolution of $\K;$ the proof is therefore completed.
\end{proof}

\begin{rk} 
The global homological dimension of right skew polynomial rings 
was studied by K.\,L.\,Fields  in \cite{Fields1969, Fields1970}. 
An upper bound for this global dimension is $n+1$ when $\varphi$ is injective and it is 
exactly $n+1$ in the case that $\varphi$ is an automorphism. Passing to the opposite ring 
(see Remark \ref{opposite}) we would get analogous results for left skew polynomial rings. 
Notice that the length of  the $\varphi$-Koszul complex  $\FK_{\bullet} (y_1,\ldots ,y_n;A)$ is $n+1$. 
\end{rk}

\subsection{The case of the Frobenius homomorphism}
For the convenience of the reader, we will specialize the construction of the $\varphi$-Koszul complex 
to the case where $A$ is a commutative Noetherian regular ring 
of positive characteristic $p>0$, and $\varphi=F^e$ is an $e$-th iteration of the Frobenius morphism.
In this case, we will denote this complex simply as \emph{Frobenius-Koszul complex}. Namely we have:
\[
\FK_{\bullet}(x_1,\dots,x_n):=\xymatrix{0\ar[r]& \FK_{n+1}\ar[r]^-{\partial_{n+1}}& \FK_n\ar[r]^-{\partial_n}& \ldots\ar[r]^-{\partial_1}& \FK_0\ar[r]&  0.}
\]

\vskip 2mm 

\noindent where, for each $0\leq l\leq n+1$,
\begin{align*}
\FK_l:=& \bigoplus_{1\leq i_1<\ldots<i_l\leq n}A[\Theta;F^e](\mathbf{e}_{i_1}\wedge\ldots\wedge\mathbf{e}_{i_l}) \hskip 2mm \oplus\bigoplus_{1\leq j_1<\ldots<j_{l-1}\leq n}A[\Theta;\varphi](\mathbf{e}_{j_1}\wedge\ldots\wedge\mathbf{e}_{j_{l-1}}\wedge u),
\end{align*}

\noindent and the differentials  $\xymatrix@1{\FK_l\ar[r]^-{\partial_l}& \FK_{l-1}}$ are given by: 

\vskip 2mm

$\cdot$ For $1\leq i_1<\ldots<i_l\leq n$, set
\[
\partial_l \left(\mathbf{e}_{i_1}\wedge\ldots\wedge\mathbf{e}_{i_l}\right):=\sum_{r=1}^l (-1)^{r-1}x_{i_r}\left(\mathbf{e}_{i_1}\wedge\ldots\wedge\mathbf{e}_{i_{r-1}}\wedge\mathbf{e}_{i_{r+1}}\wedge\mathbf{e}_{i_{r+2}}\wedge\ldots\wedge\mathbf{e}_{i_l}\right).
\]

$\cdot$ For $1\leq j_1<\ldots<j_{l-1}\leq n$, set
\begin{align*}
& \partial_l \left(\mathbf{e}_{j_1}\wedge\ldots\wedge\mathbf{e}_{j_{l-1}}\wedge u\right):= (-1)^{l-1}\left(\Theta-(x^{p^e-1}_{j_1}\cdots x^{p^e-1}_{j_{l-1}})\right)\left(\mathbf{e}_{j_1}\wedge\ldots\wedge\mathbf{e}_{j_{l-1}}\right)\\ & +\sum_{r=1}^{l-1}(-1)^{r-1} x^{p^e}_{j_r}\left(\mathbf{e}_{j_1}\wedge\ldots\wedge\mathbf{e}_{j_{r-1}}\wedge\mathbf{e}_{j_{r+1}}\wedge\mathbf{e}_{j_{r+2}}\wedge\ldots\wedge\mathbf{e}_{j_{l-1}}\wedge u\right).
\end{align*}

\begin{rk}

For  $n=2$, $\FK_{\bullet} (x_1,x_2)$ is just
\[
\xymatrix{0\ar[r]& A[\Theta ;F^e]\ar[r]^-{\partial_3}& A[\Theta ;F^e]^{\oplus 3}\ar[r]^-{\partial_2}& A[\Theta ;F^e]^{\oplus 3}\ar[r]^-{\partial_1}& A[\Theta;F^e]\ar[r]& 0,}
\]
where $\partial_3$, $\partial_2$ and $\partial_1$ are given by right multiplication by matrices 
\[
\begin{pmatrix} \Theta-(x_1^{p^e-1}x_2^{p^e-1}) & -x_2^{p^e}& x_1^{p^e}\end{pmatrix} \hskip 5mm
\left(\begin{array}{cc|c}
-x_2& x_1 & 0\\ \hline
 x_1^{p^e-1}-\Theta & 0& x_1^{p^e}\\ 0& x_2^{p^e-1}-\Theta & x_2^{p^e} \end{array}\right) \hskip 5mm
\begin{pmatrix} x_1 \\ x_2 \\ \Theta -1 \end{pmatrix}
\]

\end{rk}

\vskip 2mm 

As a  direct consequence of
Theorem \ref{resolution} we obtain the below:

\begin{cor}\label{resolution for the Frobenius}
Let $A$ be a commutative Noetherian regular ring of prime characteristic
$p,$ and let $y_1,\ldots ,y_n$ be an $A$--Koszul regular sequence. Then, 
$\FK_{\bullet} (y_1,\ldots ,y_n;A)$ defines a finite free resolution of $A/I_n$ in the category of 
left $A[\Theta ;F^e]$-modules, where $I_n =\langle y_1,\ldots ,y_n \rangle.$
\end{cor}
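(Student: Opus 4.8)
The plan is to recognize Corollary \ref{resolution for the Frobenius} as an immediate specialization of Theorem \ref{resolution}, so the entire task reduces to verifying that the hypotheses of Theorem \ref{resolution} are met when $\varphi = F^e$ is the $e$-th Frobenius iteration on the polynomial ring $S = \K[x_1,\ldots,x_n]$ and $\Phi = F^e$ is the corresponding Frobenius on $A$. First I would set up the ambient data: take $\K = \mathbb{F}_p$ (or the prime subfield of $A$) as the commutative subring, so that $S = \K[x_1,\ldots,x_n]$ maps to $A$ via $\psi(x_i) = y_i$, and check that the Frobenius $\varphi = F^e$ on $S$ does satisfy the structural condition $\varphi(x_i) \in \langle x_i \rangle$, which is exactly the content of the Remark following the definition of the $\varphi$-Koszul complex, since $F^e(x_i) = x_i^{p^e} = x_i^{p^e-1}\cdot x_i$ forces $s_i = x_i^{p^e-1}$.

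Next I would verify the commutativity of the defining square relating $\varphi$ on $S$ and $\Phi$ on $A$. Here both vertical maps are Frobenius iterations, and the horizontal map $\psi$ is a homomorphism of $\K$-algebras with $\K$ of characteristic $p$; since $\psi(x_i^{p^e}) = \psi(x_i)^{p^e} = y_i^{p^e} = \Phi(y_i) = \Phi(\psi(x_i))$ and $\psi,\Phi$ are ring homomorphisms fixing $\K$, the square $\Phi \circ \psi = \psi \circ \varphi$ commutes. The two substantive hypotheses of Theorem \ref{resolution} are then \emph{(a)} that $\Phi$ is flat, and \emph{(b)} that $y_1,\ldots,y_n$ is an $A$-Koszul regular sequence. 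Hypothesis \emph{(b)} is assumed outright in the statement of the Corollary. For \emph{(a)}, I would invoke the Kunz theorem: a Noetherian ring of prime characteristic $p$ is regular if and only if its Frobenius endomorphism $F$ is flat; since $A$ is assumed regular, $F$ is flat, and any iterate $F^e$ is then flat as a composition of flat maps. Thus $\Phi = F^e$ is flat, establishing \emph{(a)}.

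With both hypotheses of Theorem \ref{resolution} confirmed, I would conclude directly that $\FK_\bullet(y_1,\ldots,y_n;A) = A \otimes_S \FK_\bullet(x_1,\ldots,x_n)$ is a finite free resolution of $A/I_n$ in the category of left $A[\Theta;F^e]$-modules. The only point requiring a word of care is that the skew polynomial ring $A[\Theta;\Phi]$ appearing in Theorem \ref{resolution} is, in this specialization, literally the Frobenius skew polynomial ring $A[\Theta;F^e]$, so no further identification is needed. The main (and essentially only) obstacle is pinning down the flatness of $\Phi$: unlike the general Theorem where flatness is a hypothesis, here it must be \emph{derived} from regularity, and the clean way to do so is precisely through Kunz's characterization, after which the result follows with no additional computation.
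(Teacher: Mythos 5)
Your proposal is correct and follows exactly the paper's route: Corollary \ref{resolution for the Frobenius} is stated there as a direct consequence of Theorem \ref{resolution}, with the flatness of $\Phi=F^e$ supplied (implicitly) by Kunz's theorem on regular rings of characteristic $p$, which you make explicit along with the routine verifications (choice of $\K=\mathbb{F}_p$, the condition $F^e(x_i)=x_i^{p^e-1}\cdot x_i$, and the commuting square). The only detail worth adding is that the flatness of $\varphi=F^e$ on $S=\mathbb{F}_p[x_1,\ldots,x_n]$ itself, required by the setup of Theorem \ref{resolution}, also holds, either again by Kunz or directly since $F_*^e S$ is free over $S$ on the monomials with exponents below $p^e$.
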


\subsection*{Acknowledgements}
The second author thanks Rishi Vyas for fruitful discussions about the material presented here.

\bibliographystyle{alpha}
\bibliography{AFBoixReferences}

\end{document}